\def\bu{\bullet}
\def\marker{\>\hbox{${\vcenter{\vbox{
    \hrule height 0.4pt\hbox{\vrule width 0.4pt height 6pt
    \kern6pt\vrule width 0.4pt}\hrule height 0.4pt}}}$}\>}
\def\gpic#1{#1
%     \midinsert\centerline{\box\graph}\endinsert }
     \smallskip\par\noindent{\centerline{\box\graph}} \medskip}
\newtheorem{thm}{Theorem}[section]
\newtheorem{lem}[thm]{Lemma}%[section]
\newtheorem{prop}[thm]{Proposition}%[section]
\theoremstyle{definition}
\newtheorem{defn}[thm]{Definition}%[section]
\def\dfn#1{{\it #1}}
\def\less{-}
\def\cF{\mathcal{F}}
\def\sat{{\rm sat}}
\def\ex{{\rm ex}}
\def\NN{\mathbb{N}}
\def\Gb{\overline{G}}
\def\Kb{\overline{K}}
\def\FL#1{\left\lfloor #1\right\rfloor}
\def\CH#1#2{\binom{#1}{#2}}
\def\FR#1#2{\frac{#1}{#2}}
\def\VEC#1#2#3{#1_{#2},\ldots,#1_{#3}}
\def\VECOP#1#2#3#4{#1_{#2}#4\cdots#4#1_{#3}}
\def\Sb{\overline{S}}
\def\Tb{\overline{T}}
\def\esub{\subseteq}
\title{Extremal problems on saturation for the family of $k$-edge-connected
graphs}
\author{
Hui Lei\footnotemark[1],\quad
Suil O\footnotemark[2],\quad
Yongtang Shi\footnotemark[1],\quad
Douglas B. West\footnotemark[3],\quad
Xuding Zhu\footnotemark[4]
}
\date{\today}
\begin{document}
\maketitle
\begin{abstract}
Let $\cF$ be a family of graphs.  A graph $G$ is \dfn{$\mathcal{F}$-saturated}
if $G$ contains no member of $\cF$ as a subgraph but $G+e$ contains some
member of $\cF$ whenever $e\in E(\overline{G})$.  The \dfn{saturation number}
and \dfn{extremal number} of $\mathcal{F}$, denoted $\sat(n,\mathcal{F})$ and
$\ex(n,\mathcal{F})$ respectively, are the minimum and maximum numbers of edges
among $n$-vertex $\mathcal{F}$-saturated graphs.  For $k\in\NN$, let
$\mathcal{F}_k$ and $\mathcal{F}'_k$ be the families of $k$-connected and
$k$-edge-connected graphs, respectively.  Wenger proved
$\sat(n,\mathcal{F}_k)=(k-1)n-{k\choose2}$; we prove
$\sat(n,\mathcal{F}'_k)=(k-1)(n-1)-\FL{\frac {n}{k+1}}\CH{k-1}2$.
%\left(\frac{k^2+3k-4}{2}\right)t+(r-1)(k-1)$
%where $n=t(k+1)+r$ with $t\geq1$ and $0\leq r\leq k$.
We also prove $\ex(n,\cF'_k)=(k-1)n-{k\choose2}$ and characterize when
equality holds.  Finally, we give a lower bound on the spectral radius for
$\mathcal{F}_k$-saturated and $\mathcal{F}'_k$-saturated graphs.
\\

\noindent\textbf{Keywords:} saturation number, extremal number,
$k$-edge-connected, spectral radius\\
\textbf{AMS subject classification 2010:} 05C15 
\end{abstract}

\renewcommand{\thefootnote}{\fnsymbol{footnote}}
\footnotetext[1]{
Center for Combinatorics and LPMC,
Nankai University, Tianjin 300071, China;
leihui0711@163.com, shi@nankai.edu.cn.  Research supported by the National
Natural Science Foundation of China and the Natural Science Foundation of
Tianjin No.17JCQNJC00300.}
\footnotetext[2]{
Department of Applied Mathematics and Statistics,
The State University of New York, Korea, Incheon, 21985;
suil.o@sunykorea.ac.kr (corresponding author).
Research supported by NRF-2017R1D1A1B03031758.}
\footnotetext[3]{
Departments of Mathematics, Zhejiang Normal University, Jinhua, 321004
and University of Illinois, Urbana, IL 61801, USA; dwest@math.uiuc.edu.
Research supported by Recruitment Program of Foreign Experts, 1000
Talent Plan, State Administration of Foreign Experts Affairs, China.}
\footnotetext[4]{
Department of Mathematics, Zhejiang Normal University, Jinhua, 321004;
xdzhu@zjnu.edu.cn.  Research supported in part by CNSF 00571319.}

\section{Introduction}

\baselineskip 17pt

When $\cF$ is a family of graphs, a graph $G$ is \dfn{$\mathcal{F}$-saturated}
if (1) no subgraph of $G$ belongs to $\cF$, and (2) adding to $G$ any edge of
its complement $\Gb$ completes a subgraph that belongs to $\cF$ (our definition
of ``graph'' prohibits loops and multiedges).
The \dfn{saturation number} of $\mathcal{F}$, denoted $\sat(n,\mathcal{F})$, is
the least number of edges in an $n$-vertex $\mathcal{F}$-saturated graph.
The \dfn{extremal number} $\ex(n,\cF)$ is the maximum number of edges in
an $n$-vertex $\cF$-saturated graph.  When $\cF$ has only one graph $F$, we
simply write $\sat(n,F)$ and $\ex(n,F)$, such as when $F$ is $K_t$, the
complete graph with $t$ vertices.

Initiating the study of extremal graph theory, Tur\'an~\cite{Tur} determined
the extremal number $\ex(n,K_{r+1})$; the unique extremal graph is the
$n$-vertex complete $r$-partite graph whose part-sizes differ by at most $1$.
Saturation numbers were first studied by Erd\H{o}s, Hajnal, and Moon
\cite{EHM1964}; they proved $\sat(n,K_{k+1})=(k-1)n-{{k}\choose{2}}$.  They
also proved that equality holds only for the graph formed from a copy of
$K_{k-1}$ with vertex set $S$ by adding $n-k+1$ vertices that each have
neighborhood $S$.  We call this the {\it complete split graph} $S_{n,k}$;
note that $S_{n,k}$ has clique number $k$ and no $k$-connected subgraph,
and $S_{n,2}$ is a star.  For an excellent survey on saturation numbers, we
refer the reader to Faudree, Faudree, and Schmitt \cite{FFS2011}.  

In this paper, we study the relationship between saturation and 
edge-connectivity.  For a given positive integer $k$, let
$\mathcal{F}_k$ be the family of $k$-connected graphs, and let
$\mathcal{F}'_k$ be the family of $k$-edge-connected graphs.
Wenger \cite{W2014} determined $\sat(n,\mathcal{F}_k)$.
Since $K_{k+1}$ is a minimal $k$-connected graph, it is not surprising
that $S_{n,k}$ is also a smallest $\cF_k$-saturated graph, but in fact
the family of extremal graphs is much larger.  A \dfn{$k$-tree} is any
graph obtained from $K_k$ by iteratively introducing a new vertex whose
neighborhood in the previous graph consists of $k$ pairwise adjacent vertices.
Note that $S_{n,k}$ is a $(k-1)$-tree.

\begin{thm}[{\rm Wenger \cite{W2014}}]\label{k-con}
$\sat(n,\mathcal{F}_k)=(k-1)n-{k\choose2}$ when $n\geq k$.  Furthermore, every
$(k-1)$-tree with $n$ vertices has this many edges and is $\cF_k$-saturated.
\end{thm}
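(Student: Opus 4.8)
The plan is to prove matching upper and lower bounds, with the $(k-1)$-trees as the extremal family on the upper side. For the upper bound I first record that every $(k-1)$-tree on $n$ vertices has exactly $\binom{k-1}{2}+(k-1)(n-k+1)=(k-1)n-\binom{k}{2}$ edges, since each of the $n-(k-1)$ construction steps contributes $k-1$ edges on top of the initial $K_{k-1}$. I then verify the two saturation conditions. Condition~(1) (no $k$-connected subgraph) follows from the construction ordering $v_1,\dots,v_n$: any subgraph $H$ with at least $k+1$ vertices has a highest-indexed vertex $v_i$ with $i>k-1$, and inside $H$ the only neighbors of $v_i$ are its $k-1$ ``downward'' clique-neighbors, so $\delta(H)\le k-1<k$ and $H$ cannot be $k$-connected.

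Condition~(2) is the delicate half of the upper bound. Given a non-edge $xy$, I take a clique tree of $G$ and the path $B_0,\dots,B_m$ of maximal ($k$-vertex) cliques joining a bag containing $x$ to a bag containing $y$, chosen minimal so that $x$ lies only in $B_0$ and $y$ only in $B_m$; consecutive bags meet in a $(k-1)$-clique $S_i=B_{i-1}\cap B_i$. Let $H$ be the union of these cliques together with the new edge $xy$; this is a ``path-like'' $(k-1)$-tree plus an edge joining its two ends. I claim $H$ is $k$-connected. The point is that the naive argument — take $k-1$ internally disjoint $x,y$-paths guaranteed by $(k-1)$-connectivity and use $xy$ as a $k$-th path — only yields a subdivided theta graph, which is merely $2$-connected; fattening the paths into overlapping $k$-cliques is what supplies the missing degree. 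Concretely, $H$ is $(k-1)$-connected (being a $(k-1)$-tree), so any separating set of size $k-1$ is one of the minimal separators $S_i$; but because the clique tree is a path, deleting $S_i$ splits $H$ into exactly two parts, one containing $x$ and one containing $y$, which the edge $xy$ reconnects. Hence $H-W$ is connected for every $(k-1)$-set $W$, so $H$ is $k$-connected and $G+xy\supseteq H$, giving $\sat(n,\mathcal F_k)\le (k-1)n-\binom{k}{2}$.

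For the lower bound I argue by induction on $n$, the base case $n=k$ forcing $G=K_k$ (no $k$-vertex graph has a $k$-connected subgraph, so saturation forbids all non-edges). For $n\ge k+1$ I first show every $\mathcal F_k$-saturated graph is $(k-1)$-connected: if some $T$ with $|T|\le k-2$ separated $G$ (the case $T=\varnothing$ being disconnectedness), then for $x,y$ in different parts the saturating subgraph $H\subseteq G+xy$ would make $xy$ a bridge of the at-least-$2$-connected graph $H-T$, which is impossible. Being $(k-1)$-connected but, by condition~(1), not $k$-connected, $G$ has a minimum separator $S$ with $|S|=k-1$; let $C_1,\dots,C_r$ ($r\ge2$) be the components of $G-S$ and $G_i=G[C_i\cup S]$. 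Provided each $G_i$ is itself $\mathcal F_k$-saturated (addressed below), induction on $\lvert V(G_i)\rvert=n_i+k-1$ applies, and with $\sum_i n_i=n-(k-1)$ and $\lvert E(G[S])\rvert\le\binom{k-1}{2}$ the arithmetic closes exactly:
\[
|E(G)|=\sum_{i}|E(G_i)|-(r-1)\,|E(G[S])|\ \ge\ \sum_i\Big((k-1)(n_i+k-1)-\tbinom{k}{2}\Big)-(r-1)\tbinom{k-1}{2},
\]
and the identity $(k-1)^2=\binom{k}{2}+\binom{k-1}{2}$ collapses the right-hand side to precisely $(k-1)n-\binom{k}{2}$.

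The main obstacle is justifying the inductive hypothesis for the pieces, i.e.\ that each $G_i$ is again $\mathcal F_k$-saturated. For a non-edge with an endpoint $x\in C_i$ this follows from a Menger/fan argument: a $k$-connected subgraph created in $G+xy$ cannot use a vertex of another component $C_j$, since every path from $C_j$ to $x$ must cross the $(k-1)$-set $S$, allowing at most $k-1$ internally disjoint such paths and contradicting $k$-connectivity; so the subgraph is confined to $G_i+xy$. The genuinely subtle point — where I expect to spend the most effort — is controlling non-edges lying inside $S$ together with small components, for here the naive ``delete a vertex of degree $k-1$ and induct'' shortcut is unavailable: a saturated graph need not have such a vertex (for $k=3$, the graph $K_{2,m}$ with a perfect matching added on the $m$-side is $\mathcal F_3$-saturated with $\delta=3$). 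I therefore split the induction. If $\delta(G)=k-1$, I delete a vertex $v$ of degree $k-1$; then $G-v$ stays saturated because any $k$-connected subgraph created in $(G-v)+xy$ would avoid $v$ (otherwise $\deg(v)\ge k$), and $\lvert E(G)\rvert=\lvert E(G-v)\rvert+(k-1)$ gives the bound immediately. Only in the remaining case $\delta(G)\ge k$ do I invoke the separator decomposition above, and reconciling the edge counts of the pieces with the non-edges that may sit inside $S$ is the crux of the whole argument.
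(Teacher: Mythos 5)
The paper itself states this theorem as a quoted result of Wenger and gives no proof, so your argument must stand on its own; as written it has a genuine flaw in each half. In the upper bound, the step ``because the clique tree is a path, deleting $S_i$ splits $H$ into exactly two parts'' is false, because nothing prevents the same $(k-1)$-set from being the separator on two different edges of the path. Concretely, take $k=3$ and the $2$-tree $G$ with triangles $abc$, $bcd$, $bce$. Then $B_0=\{a,b,c\}$, $B_1=\{b,c,d\}$, $B_2=\{b,c,e\}$ is a legitimate clique tree (a path), and for the non-edge $xy=ae$ your minimality condition holds ($a$ lies only in $B_0$, $e$ only in $B_2$); yet $S_1=S_2=\{b,c\}$, and $H=G+ae$ has $d_H(d)=2$, so $H$ is not $3$-connected: deleting $\{b,c\}$ leaves \emph{three} components, of which the edge $ae$ reconnects only two. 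The theorem survives (here $\{a,b,c,e\}$ induces $K_4$), and the missing idea is to force the separators to be distinct: take a \emph{shortest} sequence of $k$-cliques $B_0,\dots,B_m$ with $x\in B_0$, $y\in B_m$ and $|B_{i-1}\cap B_i|=k-1$; if $S_i=S_j$ with $i<j$ one can splice out $B_i,\dots,B_{j-1}$, contradicting minimality, and once all $S_i$ are distinct, two distinct $(k-1)$-sets cannot contain one another, so each side of each $S_i$ stays connected and your two-parts-plus-$xy$ argument can be pushed through (with some further checking that every $(k-1)$-cut of the resulting union is indeed some $S_i$). As submitted, though, the step is not merely underexplained but wrong.

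In the lower bound you have correct ingredients --- the edge count of $(k-1)$-trees, the degeneracy argument for condition (1), the bridge argument showing $\mathcal{F}_k$-saturated graphs are $(k-1)$-connected, the fan/Menger confinement of the new $k$-connected subgraph to one piece, the $\delta(G)=k-1$ vertex-deletion reduction, and the closing arithmetic via $(k-1)^2=\binom{k}{2}+\binom{k-1}{2}$ --- but the induction never closes. You concede yourself that when $\delta(G)\ge k$ a non-edge with both endpoints in $S$ may be witnessed by a $k$-connected subgraph lying in a \emph{different} piece, so the pieces $G_i=G[C_i\cup S]$ need not be $\mathcal{F}_k$-saturated, and you defer this as ``the crux'' without resolving it. That is precisely where the proof must happen: your displayed inequality needs $|E(G_i)|\ge (k-1)|V(G_i)|-\binom{k}{2}$ for every piece, which the induction hypothesis does not supply without saturation of the pieces. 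A plausible repair is to run the induction on a weaker property that your fan argument \emph{does} propagate (e.g., ``no $k$-connected subgraph, and every non-edge with an endpoint outside a distinguished $(k-1)$-set creates one''), proving the same edge bound for that larger class, but this restructuring is exactly the work that is absent. So the proposal establishes neither direction completely: the upper bound contains a false step with an explicit small counterexample, and the lower bound is an announced plan whose hardest case is left open.
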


%where $t\ge 1$, $0\leq r\leq k$, $n=t(k+1)+r$, and
%$\rho(k,t,r)= \left(\frac{k^2+3k-4}{2}\right)t+(r-1)(k-1)$. 
For $n\ge k+1$, we determine $\sat(\cF'_k)$ and $\ex(\cF'_k)$.
An $\cF'_1$-saturated graph has no edges, so henceforth we may assume $k\ge2$.
Let $\rho_k(n)=(k-1)(n-1)-\FL{\FR{n}{k+1}}\CH{k-1}2$.  In Section 2, we
construct for $n\ge k+1$ an $\mathcal{F}'_k$-saturated graph with $n$ vertices
having $\rho_k(n)$ edges, proving $\sat(n,\cF'_k)\le\rho_k(n)$.  Using
induction on $n$, in Section 3 we prove that if $G$ is
$\mathcal{F}_k'$-saturated, then $\rho_k(n)\le |E(G)|\le(k-1)n-\CH{k}{2}$,
where $E(G)$ denotes the edge set of a graph $G$.  Since $S_{n,k}$ is also
$\cF'_k$-saturated, the upper bound is sharp.  Thus
$\sat(n,\mathcal{F}'_k)=\rho_k(n)$ and $\ex(n,\mathcal{F}'_k)=(k-1)n-\CH k2$.

The spectral radius of a graph is the largest eigenvalue of its adjacency
matrix.  In Section 4, we give a lower bound on the spectral radius for
$\mathcal{F}_k$-saturated and $\mathcal{F}'_k$-saturated graphs,

Additional notation is as follows.  For $v\in V(G)$, let $d_G(v)$ and $N_G(v)$
denote the degree and the neighborhood of $v$ in $G$, respectively.  For 
$A,B\subseteq V(G)$, let $\overline{A}=V(G)\less A$, let $[A,B]$ be the set of
edges with endpoints in $A$ and $B$, and let $G[A]$ to denote the subgraph
of $G$ induced by $A$.  Let $[k]=\{1,2,\ldots,k\}$.

Let $K_{k+1}^-$ denote the graph obtained from $K_{k+1}$ by deleting one edge;
this graph is the unique smallest $\cF'_k$-saturated graph that is not a
complete graph.  The complete graphs with at most $k$ vertices are trivially 
$\cF'_k$-saturated, since there are no edges to add.  We therefore use
{\it nontrivial $\cF'_k$-saturated graph} to mean an $\cF'_k$-saturated graph
with at least $k+1$ vertices.

\section{Construction}

Recall that $\rho_k(n)=(k-1)(n-1)-\FL{\FR{n}{k+1}}\CH{k-1}2$ and that we
restrict to $k\ge2$ since $\cF'_1$-saturated graphs have no edges.  In this
section, for $n\ge k+1$, we construct an $n$-vertex $\mathcal{F}'_k$-saturated
graph with $\rho_k(n)$ edges.  Since every $\cF'_2$-saturated graph is a tree
(and $\rho_2(n)=n-1$), we need only consider $k\ge3$.

\begin{defn}\label{Gkn}
Fix $k\in\NN$ with $k\ge3$.  For $n\in\NN$ with $n>k$,
let $t=\FL{\FR n{k+1}}$ and $r=n-t(k+1)$.  Let $H_{i}$ be a copy of $K_{k+1}^-$
using vertices $u_{i,1},\dots,u_{i,k+1}$, with $u_{i,1}$ and $u_{i,k+1}$
nonadjacent.  Let $U_i=V(H_i)$ for $i\in[t]$.
Let $F_{t}$ be the graph obtained from the disjoint union
$H_{1}+\cdots+H_{t}$ by adding the edge $u_{i,j}u_{i+1,j}$ for all $i$ and
$j$ such that $i\in[t-1]$ and $j\in[k+1]\less\{2,k\}$.  Let $G_{k,n}$
be the graph obtained from $F_{t}$ by adding new vertices $\VEC w1r$, each
having neighborhood $V(H_{t})\less\{u_{t,1},u_{t,k+1}\}$.
\end{defn}

\begin{figure}[h]
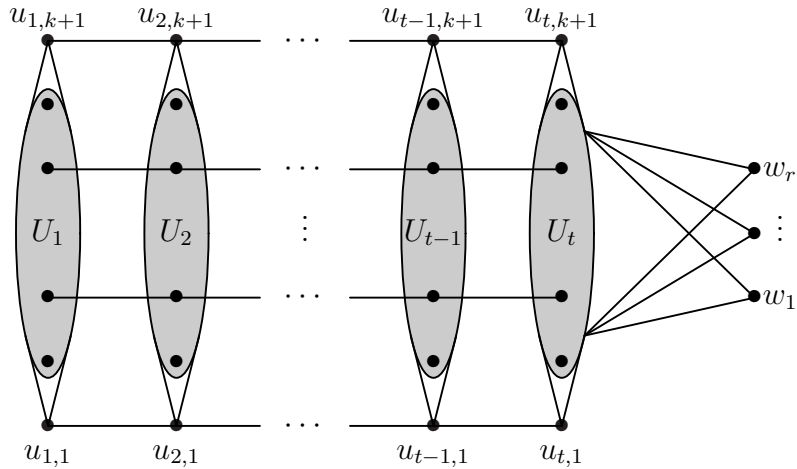

\gpic{
\expandafter\ifx\csname graph\endcsname\relax \csname newbox\endcsname\graph\fi
\expandafter\ifx\csname graphtemp\endcsname\relax \csname newdimen\endcsname\graphtemp\fi
\setbox\graph=\vtop{\vskip 0pt\hbox{%
    \graphtemp=.5ex\advance\graphtemp by 2.151in
    \rlap{\kern 0.168in\lower\graphtemp\hbox to 0pt{\hss $\bu$\hss}}%
    \graphtemp=.5ex\advance\graphtemp by 2.151in
    \rlap{\kern 0.840in\lower\graphtemp\hbox to 0pt{\hss $\bu$\hss}}%
    \graphtemp=.5ex\advance\graphtemp by 2.151in
    \rlap{\kern 2.185in\lower\graphtemp\hbox to 0pt{\hss $\bu$\hss}}%
    \graphtemp=.5ex\advance\graphtemp by 2.151in
    \rlap{\kern 2.857in\lower\graphtemp\hbox to 0pt{\hss $\bu$\hss}}%
    \graphtemp=.5ex\advance\graphtemp by 0.134in
    \rlap{\kern 0.168in\lower\graphtemp\hbox to 0pt{\hss $\bu$\hss}}%
    \graphtemp=.5ex\advance\graphtemp by 0.134in
    \rlap{\kern 0.840in\lower\graphtemp\hbox to 0pt{\hss $\bu$\hss}}%
    \graphtemp=.5ex\advance\graphtemp by 0.134in
    \rlap{\kern 2.185in\lower\graphtemp\hbox to 0pt{\hss $\bu$\hss}}%
    \graphtemp=.5ex\advance\graphtemp by 0.134in
    \rlap{\kern 2.857in\lower\graphtemp\hbox to 0pt{\hss $\bu$\hss}}%
    \special{pn 11}%
    \special{sh 0.200}%
    \special{ar 168 1143 168 756 0 6.28319}%
    \graphtemp=.5ex\advance\graphtemp by 1.143in
    \rlap{\kern 0.168in\lower\graphtemp\hbox to 0pt{\hss $U_1$\hss}}%
    \graphtemp=.5ex\advance\graphtemp by 2.286in
    \rlap{\kern 0.168in\lower\graphtemp\hbox to 0pt{\hss $u_{1,1}$\hss}}%
    \graphtemp=.5ex\advance\graphtemp by 0.000in
    \rlap{\kern 0.168in\lower\graphtemp\hbox to 0pt{\hss $u_{1,k+1}$\hss}}%
    \graphtemp=.5ex\advance\graphtemp by 1.815in
    \rlap{\kern 0.168in\lower\graphtemp\hbox to 0pt{\hss $\bu$\hss}}%
    \graphtemp=.5ex\advance\graphtemp by 1.479in
    \rlap{\kern 0.168in\lower\graphtemp\hbox to 0pt{\hss $\bu$\hss}}%
    \graphtemp=.5ex\advance\graphtemp by 0.807in
    \rlap{\kern 0.168in\lower\graphtemp\hbox to 0pt{\hss $\bu$\hss}}%
    \graphtemp=.5ex\advance\graphtemp by 0.471in
    \rlap{\kern 0.168in\lower\graphtemp\hbox to 0pt{\hss $\bu$\hss}}%
    \special{pa 49 608}%
    \special{pa 168 134}%
    \special{fp}%
    \special{pa 168 134}%
    \special{pa 287 608}%
    \special{fp}%
    \special{pa 49 1678}%
    \special{pa 168 2151}%
    \special{fp}%
    \special{pa 168 2151}%
    \special{pa 287 1678}%
    \special{fp}%
    \special{sh 0.200}%
    \special{ar 840 1143 168 756 0 6.28319}%
    \graphtemp=.5ex\advance\graphtemp by 1.143in
    \rlap{\kern 0.840in\lower\graphtemp\hbox to 0pt{\hss $U_2$\hss}}%
    \graphtemp=.5ex\advance\graphtemp by 2.286in
    \rlap{\kern 0.840in\lower\graphtemp\hbox to 0pt{\hss $u_{2,1}$\hss}}%
    \graphtemp=.5ex\advance\graphtemp by 0.000in
    \rlap{\kern 0.840in\lower\graphtemp\hbox to 0pt{\hss $u_{2,k+1}$\hss}}%
    \graphtemp=.5ex\advance\graphtemp by 1.815in
    \rlap{\kern 0.840in\lower\graphtemp\hbox to 0pt{\hss $\bu$\hss}}%
    \graphtemp=.5ex\advance\graphtemp by 1.479in
    \rlap{\kern 0.840in\lower\graphtemp\hbox to 0pt{\hss $\bu$\hss}}%
    \graphtemp=.5ex\advance\graphtemp by 0.807in
    \rlap{\kern 0.840in\lower\graphtemp\hbox to 0pt{\hss $\bu$\hss}}%
    \graphtemp=.5ex\advance\graphtemp by 0.471in
    \rlap{\kern 0.840in\lower\graphtemp\hbox to 0pt{\hss $\bu$\hss}}%
    \special{pa 721 608}%
    \special{pa 840 134}%
    \special{fp}%
    \special{pa 840 134}%
    \special{pa 959 608}%
    \special{fp}%
    \special{pa 721 1678}%
    \special{pa 840 2151}%
    \special{fp}%
    \special{pa 840 2151}%
    \special{pa 959 1678}%
    \special{fp}%
    \special{pa 168 2151}%
    \special{pa 1277 2151}%
    \special{fp}%
    \special{pa 168 1479}%
    \special{pa 1277 1479}%
    \special{fp}%
    \special{pa 168 807}%
    \special{pa 1277 807}%
    \special{fp}%
    \special{pa 168 134}%
    \special{pa 1277 134}%
    \special{fp}%
    \special{sh 0.200}%
    \special{ar 2185 1143 168 756 0 6.28319}%
    \graphtemp=.5ex\advance\graphtemp by 1.143in
    \rlap{\kern 2.185in\lower\graphtemp\hbox to 0pt{\hss $U_{t-1}$\hss}}%
    \graphtemp=.5ex\advance\graphtemp by 2.286in
    \rlap{\kern 2.185in\lower\graphtemp\hbox to 0pt{\hss $u_{t-1,1}$\hss}}%
    \graphtemp=.5ex\advance\graphtemp by 0.000in
    \rlap{\kern 2.185in\lower\graphtemp\hbox to 0pt{\hss $u_{t-1,k+1}$\hss}}%
    \graphtemp=.5ex\advance\graphtemp by 1.815in
    \rlap{\kern 2.185in\lower\graphtemp\hbox to 0pt{\hss $\bu$\hss}}%
    \graphtemp=.5ex\advance\graphtemp by 1.479in
    \rlap{\kern 2.185in\lower\graphtemp\hbox to 0pt{\hss $\bu$\hss}}%
    \graphtemp=.5ex\advance\graphtemp by 0.807in
    \rlap{\kern 2.185in\lower\graphtemp\hbox to 0pt{\hss $\bu$\hss}}%
    \graphtemp=.5ex\advance\graphtemp by 0.471in
    \rlap{\kern 2.185in\lower\graphtemp\hbox to 0pt{\hss $\bu$\hss}}%
    \special{pa 2066 608}%
    \special{pa 2185 134}%
    \special{fp}%
    \special{pa 2185 134}%
    \special{pa 2304 608}%
    \special{fp}%
    \special{pa 2066 1678}%
    \special{pa 2185 2151}%
    \special{fp}%
    \special{pa 2185 2151}%
    \special{pa 2304 1678}%
    \special{fp}%
    \special{sh 0.200}%
    \special{ar 2857 1143 168 756 0 6.28319}%
    \graphtemp=.5ex\advance\graphtemp by 1.143in
    \rlap{\kern 2.857in\lower\graphtemp\hbox to 0pt{\hss $U_t$\hss}}%
    \graphtemp=.5ex\advance\graphtemp by 2.286in
    \rlap{\kern 2.857in\lower\graphtemp\hbox to 0pt{\hss $u_{t,1}$\hss}}%
    \graphtemp=.5ex\advance\graphtemp by 0.000in
    \rlap{\kern 2.857in\lower\graphtemp\hbox to 0pt{\hss $u_{t,k+1}$\hss}}%
    \graphtemp=.5ex\advance\graphtemp by 1.815in
    \rlap{\kern 2.857in\lower\graphtemp\hbox to 0pt{\hss $\bu$\hss}}%
    \graphtemp=.5ex\advance\graphtemp by 1.479in
    \rlap{\kern 2.857in\lower\graphtemp\hbox to 0pt{\hss $\bu$\hss}}%
    \graphtemp=.5ex\advance\graphtemp by 0.807in
    \rlap{\kern 2.857in\lower\graphtemp\hbox to 0pt{\hss $\bu$\hss}}%
    \graphtemp=.5ex\advance\graphtemp by 0.471in
    \rlap{\kern 2.857in\lower\graphtemp\hbox to 0pt{\hss $\bu$\hss}}%
    \special{pa 2738 608}%
    \special{pa 2857 134}%
    \special{fp}%
    \special{pa 2857 134}%
    \special{pa 2976 608}%
    \special{fp}%
    \special{pa 2738 1678}%
    \special{pa 2857 2151}%
    \special{fp}%
    \special{pa 2857 2151}%
    \special{pa 2976 1678}%
    \special{fp}%
    \special{pa 2857 2151}%
    \special{pa 1748 2151}%
    \special{fp}%
    \special{pa 2857 1479}%
    \special{pa 1748 1479}%
    \special{fp}%
    \special{pa 2857 807}%
    \special{pa 1748 807}%
    \special{fp}%
    \special{pa 2857 134}%
    \special{pa 1748 134}%
    \special{fp}%
    \graphtemp=.5ex\advance\graphtemp by 2.151in
    \rlap{\kern 1.513in\lower\graphtemp\hbox to 0pt{\hss $\cdots$\hss}}%
    \graphtemp=.5ex\advance\graphtemp by 1.479in
    \rlap{\kern 1.513in\lower\graphtemp\hbox to 0pt{\hss $\cdots$\hss}}%
    \graphtemp=.5ex\advance\graphtemp by 1.143in
    \rlap{\kern 1.513in\lower\graphtemp\hbox to 0pt{\hss $\vdots$\hss}}%
    \graphtemp=.5ex\advance\graphtemp by 0.807in
    \rlap{\kern 1.513in\lower\graphtemp\hbox to 0pt{\hss $\cdots$\hss}}%
    \graphtemp=.5ex\advance\graphtemp by 0.134in
    \rlap{\kern 1.513in\lower\graphtemp\hbox to 0pt{\hss $\cdots$\hss}}%
    \graphtemp=.5ex\advance\graphtemp by 1.479in
    \rlap{\kern 3.866in\lower\graphtemp\hbox to 0pt{\hss $\bu$\hss}}%
    \graphtemp=.5ex\advance\graphtemp by 1.143in
    \rlap{\kern 3.866in\lower\graphtemp\hbox to 0pt{\hss $\bu$\hss}}%
    \graphtemp=.5ex\advance\graphtemp by 0.807in
    \rlap{\kern 3.866in\lower\graphtemp\hbox to 0pt{\hss $\bu$\hss}}%
    \graphtemp=.5ex\advance\graphtemp by 1.479in
    \rlap{\kern 4.000in\lower\graphtemp\hbox to 0pt{\hss $w_1$\hss}}%
    \graphtemp=.5ex\advance\graphtemp by 1.143in
    \rlap{\kern 4.000in\lower\graphtemp\hbox to 0pt{\hss $\vdots$\hss}}%
    \graphtemp=.5ex\advance\graphtemp by 0.807in
    \rlap{\kern 4.000in\lower\graphtemp\hbox to 0pt{\hss $w_r$\hss}}%
    \special{pa 2976 608}%
    \special{pa 3866 1479}%
    \special{fp}%
    \special{pa 3866 1479}%
    \special{pa 2976 1678}%
    \special{fp}%
    \special{pa 2976 608}%
    \special{pa 3866 1143}%
    \special{fp}%
    \special{pa 3866 1143}%
    \special{pa 2976 1678}%
    \special{fp}%
    \special{pa 2976 608}%
    \special{pa 3866 807}%
    \special{fp}%
    \special{pa 3866 807}%
    \special{pa 2976 1678}%
    \special{fp}%
    \hbox{\vrule depth2.286in width0pt height 0pt}%
    \kern 4.000in
  }%
}%
}

\vspace{-1pc}
\caption{The graph $G_{k,n}$.\label{gkn}}
\end{figure}

\begin{prop}\label{construction}
For $n>k\ge3$, the graph $G_{k,n}$ of Definition~\ref{Gkn} is
$\mathcal{F}'_k$-saturated and has $n$ vertices and $\rho_k(n)$ edges.
\end{prop}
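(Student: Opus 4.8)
The plan is to verify the two counts and then the two defining properties of saturation.

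The vertex count is immediate: the $t$ blocks contribute $t(k+1)$ vertices and $\VEC w1r$ contribute $r$, and $t(k+1)+r=n$ by the definitions of $t$ and $r$. For the edge count, each $H_i\cong K_{k+1}^-$ has $\binom{k+1}{2}-1$ edges, there are $(t-1)(k-1)$ connecting edges (namely $k-1$ between each consecutive pair of blocks, since $|[k+1]\setminus\{2,k\}|=k-1$), and the $w_i$'s contribute $r(k-1)$ edges. Summing and substituting $r=n-t(k+1)$ reduces, after a short routine computation, to $(k-1)(n-1)-t\binom{k-1}{2}=\rho_k(n)$.

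Next I would show $G_{k,n}$ has no $k$-edge-connected subgraph. The facts I use are that a $k$-edge-connected graph has minimum degree at least $k$ (hence at least $k+1$ vertices), and that $G_{k,n}$ is a ``path of blocks''. Since each $w_i$ has degree $k-1$, no $k$-edge-connected subgraph $H$ can contain a $w_i$, so $V(H)\subseteq\bigcup_i U_i$. If $V(H)$ met two blocks, then taking $a$ minimal with $U_a\cap V(H)\neq\es$, all edges of $H$ leaving $U_a$ would go to $U_{a+1}$ (avoiding the $w_i$, there are no other edges from $U_a$ to outside $U_{a-1}\cup U_a\cup U_{a+1}$, and $U_{a-1}\cap V(H)=\es$), and there are only $k-1$ such edges; this is a cut of $H$ of size at most $k-1$, a contradiction. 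Hence $V(H)\subseteq U_a$ for a single $a$, so $H\subseteq K_{k+1}^-$ must use all $k+1$ vertices; but then $u_{a,1}$ has degree at most $k-1$, so $H$ is not $k$-edge-connected.

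The crux is to show $G_{k,n}+e$ contains a $k$-edge-connected subgraph for every $e\in E(\overline{G_{k,n}})$. Several non-edges complete a $K_{k+1}$ directly: adding $u_{i,1}u_{i,k+1}$ turns $H_i$ into $K_{k+1}$; adding $w_lu_{t,1}$ makes $\{u_{t,1},\dots,u_{t,k},w_l\}$ a $K_{k+1}$ (symmetrically for $w_lu_{t,k+1}$); and adding $w_lw_{l'}$ makes $\{u_{t,2},\dots,u_{t,k},w_l,w_{l'}\}$ a $K_{k+1}$. The remaining non-edges join two different blocks, $e=u_{i,a}u_{j,b}$ with $i<j$, or join some $w_l$ to an earlier block. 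For the first of these I would prove a Chain Lemma: for $i<j$, the subgraph $C$ on $U_i\cup\dots\cup U_j$ consisting of the blocks with their connecting edges plus one extra edge from $U_i$ to $U_j$ is $k$-edge-connected. The heart of the lemma is that the only edge cuts of the bare chain of size less than $k$ are the $j-i$ ``interval cuts'' separating $U_i\cup\dots\cup U_p$ from the rest (each of size exactly $k-1$); the extra edge crosses all of these and, by monotonicity of cut sizes, creates no new deficient cut.

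I expect the Chain Lemma to be the main obstacle. To prove it I fix a cut $(S,\Sb)$ and split on whether some block $U_p$ is ``split'' (met by both $S$ and $\Sb$). If no block is split, $S$ is a union of whole blocks and the crossing edges are the $k-1$ connecting edges at each boundary between an $S$-block and a non-$S$-block; a single boundary forces $S$ to be a prefix $U_i\cup\dots\cup U_p$ (an interval cut, size $k-1$), while two or more boundaries give at least $2(k-1)\ge k$ edges. If some block $U_p$ is split, the cut inside $H_p\cong K_{k+1}^-$ has at least $k-1$ edges, with equality only when the $S$-side of $U_p$ is a single degree-$(k-1)$ vertex $u_{p,1}$ or $u_{p,k+1}$; the only way the total could stay below $k$ is that this is the unique split block and no connecting edge crosses, but then a neighbor of $U_p$ must be a full block, which forces one of the other connector columns (an index in $[k+1]\setminus\{2,k\}$) to cross, a contradiction. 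Finally, the ``$w_l$ to an earlier block'' case is handled like the lemma: once $e=w_lu_{i,a}$ is added, $w_l$ has degree exactly $k$, and attaching $w_l$ to the end block $U_t$ while routing its new edge to $U_i$ makes $w_l$'s edge play the role of the bridging edge across all interval cuts; a short check—separating $w_l$ costs its $k$ incident edges—then shows the resulting graph is $k$-edge-connected.
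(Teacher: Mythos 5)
Your proposal is correct, and it follows the same overall architecture as the paper's proof: identical vertex/edge counts, the same argument that the $k-1$ inter-block cuts and the degree-$(k-1)$ vertices preclude any $k$-edge-connected subgraph, and the same identification of the non-edges that directly complete a $K_{k+1}$ (inside a block, or inside the complete split graph on $U_t\cup\{w_1,\dots,w_r\}$). Where you genuinely diverge is in how you verify $k$-edge-connectivity of the chain-plus-new-edge: the paper deletes an arbitrary set $S$ of $k-1$ edges and argues directly that $H'-S$ stays connected, splitting on whether some $H'[U_l]-S$ is disconnected and then ``walking'' components up and down the chain; you instead prove a Chain Lemma classifying \emph{all} edge cuts of size less than $k$ in the bare chain (they are exactly the $j-i$ interval cuts, each of size $k-1$, which your split-block analysis of $K_{k+1}^-$ and the two-connector-column argument establishes cleanly for $k\ge3$), and then observe that the added edge crosses every interval cut while edge addition never shrinks a cut. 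These are dual formulations of the same Menger-type fact, but your version is more modular and arguably tighter than the paper's component-walking step, which is somewhat informal; it also makes the final case transparent, since $w_l$'s new edge plays the bridging role across all interval cuts and isolating $w_l$ costs its $k$ incident edges. The paper gets the same economy differently, by declaring $U_{t+1}=\{y\}$ so the $w$-case merges into the general chain argument; you handle it separately, at the cost of a small amount of repetition but with no gap.
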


\begin{proof}
Since $n=t(k+1)+r$, the graph $G_{k,n}$ has $n$ vertices.

In $G_{k,n}$, the vertices $\VEC w1r$ have degree $k-1$ and hence cannot lie in
a $k$-edge-connected subraph.  In $F_{t}$, the edges joining $U_i$ and
$U_{i+1}$ form a cut of size $k-1$, so any $k$-edge-connected subgraph of
$G_{k,n}$ is contained in just one copy of $K_{k+1}^-$.  However, $K_{k+1}^-$
has two vertices of degree $k-1$, leaving only $k-1$ other vertices.  Hence
$G_{k,n}$ has no $k$-edge-connected subgraph.

In $F_{t}$, there are $t\left[\CH{k+1}2-1\right]+(k-1)(t-1)$ edges.
The added vertices $\VEC w1r$ contribute $r(k-1)$ more edges.  Since
$n=t(k+1)+r$, we compute
\begin{align*}
|E(G_{k,n})|
&=t\left[\CH{k+1}2-1\right]+(k-1)(t+r-1)= t\FR{k^2+3k-4}2+(k-1)(r-1)\\
&=t\FR{(k-1)(k+4)}2+(k-1)(r-1)=(k-1)[t(k+1)+r-1]-t\CH{k-1}2\\
&=(k-1)(n-1)-t\CH{k-1}2=\rho_k(n).\\
\end{align*}

\vspace{-1pc}
Let $xy$ be an edge in the complement of $G_{k,n}$.
It remains to show that the graph $G'$ obtained by adding $xy$ to $G_{k,n}$
has a $k$-edge-connected subgraph.  Note that the subgraph of $G_{k,n}$
induced by $U_t\cup\{\VEC w1r\}$ is the $K_{k+1}$-saturated graph $S_{k+r+1,k}$
of~\cite{EHM1964}, so $G'$ contains $K_{k+1}$ when $x$ and $y$ lie
in this set.  Similarly, if $xy$ is the one missing edge of $H_{i}$, then
$G'$ again contains $K_{k+1}$.  Hence we may assume that 
$x\in U_i$ with $1\le i<t$ and that $y\in\{\VEC w1r\}$ or $y\in U_j$ with
$i<j\le t$.  If $y\in\{\VEC w1r\}$, then let $j=t+1$ and $U_j=\{y\}$,
in order to combine cases.
Let $H'$ be the subgraph of $G'$ induced by $\bigcup_{l=i}^{j} U_l$.
To prove that $H'$ is $k$-edge-connected, we show that $H'-S$ is connected,
where $S$ is a set of $k-1$ edges in $H'$.

Suppose first that $H'[U_l]-S$ is disconnected for some $l$ with $i\le l\le j$
(this can only occur with $l\le t$).  Since $\kappa'(H_{l})=k-1$ for
$l \in [t]$, this case requires $S\esub E(H'[U_l])$.  In $H'-S$, every vertex
of $U_l$ except $u_{l,2}$ and $u_{l,k}$ has a neighbor
in $U_{l-1}$ when $l>i$ and in $U_{l+1}$ when $l<j$.  Also $u_{l,2}$ and 
$u_{l,k}$ have degree $k$ in $H'$, so in $H'-S$ each has a neighbor in $U_l$.
If one of them is the only neighbor of the other in $H'-S$, then in $H'-S$
it has an additional neighbor in $U_l$.  Thus in $H'-S$ each component of the
subgraph induced by $U_l$ can extend upward to reach $U_j$ and downward to
reach $U_1$, at least one of which is connected.

Hence we may assume that $H'[U_l]-S$ is connected for each $l$ with
$i\le l\le j$.  For $i\le l<j$, the subgraph induced by $U_l\cup U_{l+1}$ is
also connected unless $S$ consists of all $k-1$ edges joining $U_l$ and 
$U_{l+1}$.  If $S$ is not any of these sets, then altogether $H'[U_l]-S$
is connected.  However, if $S$ consists of the $k-1$ edges joining 
$U_l$ and $U_{l+1}$, then the subgraph induced by $\VECOP Uil\cup$
and the subgraph induced by $\VECOP U{l+1}j\cup$ are connected, and the 
presence of $xy$ connects these two subgraphs.
\end{proof}

By Proposition~\ref{construction}, $\sat(n,\cF'_k) \le \rho_k(n)$.  Thus
$\sat(n,\cF'_k)$ is much smaller than $\sat(n,\cF_k)$ when $n\ge 2(k+1)$.
Indeed, $G_{k,n}$ is not $\cF_k$-saturated.  In particular, adding
an edge joining $u_{1,1}$ to a vertex $v$ outside $U_1$ does not create a
$k$-connected subgraph.  Since $G_{k,n}$ has no $k$-edge-connected subgraph, it
has no $k$-connected subgraph, so a $k$-connected subgraph $H'$ of the new
graph $G'$ must contain the edge $u_{1,1}v$.  Let $S=U_1-\{u_{1,2},u_{1,k}\}$;
note that $|S|=k-1$.  Since $H'$ must have $k-1$ internally disjoint paths from
$v$ to $u_{1,1}$ in addition to the edge $vu_{1,1}$, and $S$ is the set of
vertices in $U_1$ with neighbors outside $U_1$, all of $S$ must also lie in
$V(H')$.  Since $d_G(u_{1,k+1})=k$, we must also include $u_{1,2}$ and
$u_{1,k}$ in $V(H')$.  Now $H'-S$ has $u_{1,2}u_{1,k}$ as an isolated edge.

\section{Saturation and extremal number of $\mathcal{F}_k'$}

In this section, we show that if $G$ is an $\mathcal{F}_k'$-saturated
$n$-vertex graph with $n\ge k+1$, then $|E(G)| \ge \rho_k(n)$.
First, we investigate the properties of an $\mathcal{F}_k'$-saturated graph.

\begin{lem}\label{connectivity}
If $G$ is $\cF'_k$-saturated and has more than $k$ vertices, then
$\kappa'(G)=k-1$.
\end{lem}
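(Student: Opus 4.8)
The plan is to prove the two inequalities $\kappa'(G)\le k-1$ and $\kappa'(G)\ge k-1$ separately. The upper bound is immediate: if $\kappa'(G)\ge k$, then $G$ is itself $k$-edge-connected, so $G$ is a member of $\cF'_k$ contained in $G$ as a subgraph, contradicting the requirement that an $\cF'_k$-saturated graph contain no member of $\cF'_k$. Here I use that $G$ has more than $k$ vertices, so it is a legitimate graph on which $k$-edge-connectivity can be tested; it also follows in passing that $G$ is not complete, since $K_n$ with $n>k$ is $k$-edge-connected.

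For the lower bound I would argue by contradiction. Suppose $\kappa'(G)\le k-2$, and fix a bipartition $(A,\overline{A})$ of $V(G)$ realizing a minimum edge cut, so that $|[A,\overline{A}]|=\kappa'(G)\le k-2$ with both $A$ and $\overline{A}$ nonempty. The engine of the proof is the following: granting for the moment that some non-edge joins $A$ and $\overline{A}$, pick such a pair $xy$ with $x\in A$ and $y\in\overline{A}$. By saturation, $G+xy$ contains a $k$-edge-connected subgraph $H$. Since $G$ contains no such subgraph, $H$ must use the new edge $xy$, so $x,y\in V(H)$ and hence $V(H)$ meets both $A$ and $\overline{A}$. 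Now consider in $H$ the edge cut determined by the partition $(V(H)\cap A,\,V(H)\cap\overline{A})$: it is nontrivial, so the $k$-edge-connectivity of $H$ forces it to contain at least $k$ edges. But every such edge lies in $[A,\overline{A}]\cup\{xy\}$, which has at most $(k-2)+1=k-1$ edges, a contradiction. This yields $\kappa'(G)\ge k-1$, and combined with the upper bound gives $\kappa'(G)=k-1$.

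The one step needing care --- and the only genuine obstacle --- is verifying that a non-edge crosses the cut $(A,\overline{A})$; if every pair between the two sides happened to be an edge, the argument above would stall. To handle this I would simply count: the number of non-edges across the cut is $|A|\,|\overline{A}|-|[A,\overline{A}]|$. Using $|A|\,|\overline{A}|\ge |A|+|\overline{A}|-1=n-1$ (valid since both sides are nonempty) together with $n\ge k+1$ and $|[A,\overline{A}]|\le k-2$, this count is at least $(n-1)-(k-2)=n-k+1\ge 2$, so a crossing non-edge always exists. Everything else is routine once the minimum cut and the saturating edge are in place.
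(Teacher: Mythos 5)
Your proof is correct and takes essentially the same route as the paper: the upper bound is immediate since $G$ is a subgraph of itself, and for the lower bound both arguments fix a minimum cut of size at most $k-2$, locate a crossing non-edge by a counting estimate (your $(|A|-1)(|\overline{A}|-1)\ge 0$ computation matches the paper's ``at least $k$ pairs'' observation), add that edge, and derive a contradiction from the fact that a $k$-edge-connected subgraph $H$ of $G+xy$ must contain $xy$ yet has too few edges available across the cut. The only cosmetic difference is the final step: the paper invokes Menger's theorem to get $k$ edge-disjoint $x,y$-paths, at least $k-1$ of which must each use a distinct edge of the cut, whereas you apply the edge-cut condition of $H$ directly to the partition $(V(H)\cap A,\,V(H)\cap\overline{A})$ --- two equivalent formulations of $k$-edge-connectivity, with yours slightly more elementary.
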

\begin{proof}
Since $G$ has no $k$-edge-connected subgraph, $\kappa'(G)\le k-1$.
If $\kappa'(G)<k-1$, then $G$ has an edge cut $[S,\Sb]$ of size less than
$k-1$.  Since $|V(G)|>k$, there are at least $k$ pairs $(x,y)$ with $x\in S$
and $y\in \Sb$.  Hence there is such a pair $(x,y)$ with $xy\notin E(G)$.
Let $G'$ be the graph obtained by adding the edge $xy$ to $G$.

Since $G$ has no $k$-edge-connected subgraph, any such subgraph of $G'$
must contain the edge $xy$.  Hence it contains $k$ edge-disjoint paths
with endpoints $x$ and $y$, by Menger's Theorem.  Besides the edge $xy$, there
must be at least $k-1$ with endpoints $x$ and $y$ that use edges of $[S,\Sb]$.
This contradicts $|[S,\Sb]|<k-1$.  Hence $G'$ has no $k$-edge-connected
subgraph, and $G$ cannot be $\cF'_k$-saturated.
\end{proof}

%Note that $\sat(n,\mathcal{F}'_1)=0$ and $\sat(n,\mathcal{F}'_2)=n-1$.
%Thus, we assume $k\geq3$ from now.

\begin{lem}\label{mainlemma}
Assume $k\ge3$, and let $G$ be a $\cF'_k$-saturated graph with at least $k+2$
vertices.  If $S$ is a vertex subset in $V(G)$ such that $|[S,\Sb]|=k-1$ and
$|S|\ge|\Sb|$, then $G[S]$ is a nontrivial $\cF'_k$-saturated graph,
and $G[\Sb]$ is $K_1$ or is a nontrivial $\cF'_k$-saturated graph.
\end{lem}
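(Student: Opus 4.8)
The plan is to verify directly the two defining properties of $\cF'_k$-saturation for each of $G[S]$ and $G[\Sb]$, and then to upgrade the bare conclusion ``$\cF'_k$-saturated'' to ``nontrivial, or $K_1$'' by a size argument. Throughout I use Lemma~\ref{connectivity}, which gives $\kappa'(G)=k-1$, hence $\delta(G)\ge\kappa'(G)=k-1$. The ``contains no member'' property is immediate: $G[S]$ and $G[\Sb]$ are subgraphs of $G$, which has no $k$-edge-connected subgraph. For the ``adding an edge completes a member'' property, take a non-edge $xy$ with $x,y\in S$. Since $G$ is $\cF'_k$-saturated, $G+xy$ contains a $k$-edge-connected subgraph $H$, which must use $xy$. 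I examine how $H$ meets the cut: the edges of $H$ joining $V(H)\cap S$ to $V(H)\cap\Sb$ all lie in $[S,\Sb]$ (the added edge sits inside $S$), so there are at most $k-1$ of them. As $H$ is $k$-edge-connected, every edge cut of $H$ with both sides nonempty has at least $k$ edges; since $x,y\in V(H)\cap S$, this forces $V(H)\cap\Sb=\varnothing$, i.e.\ $V(H)\subseteq S$, so $H\subseteq G[S]+xy$. Thus $G[S]$ is $\cF'_k$-saturated, and the identical argument shows $G[\Sb]$ is $\cF'_k$-saturated whenever $|\Sb|\ge2$; if $|\Sb|=1$ then $G[\Sb]=K_1$, an allowed outcome.

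It remains to rule out $2\le|X|\le k$ for $X\in\{S,\Sb\}$, as an $\cF'_k$-saturated graph on at least $k+1$ vertices is by definition nontrivial (and $|S|\ge2$ holds automatically, since $|S|\ge|\Sb|$ and $n\ge k+2$). So suppose $2\le|X|\le k$. A $k$-edge-connected graph has minimum degree at least $k$, hence at least $k+1$ vertices; therefore adding an edge to any non-complete graph on at most $k$ vertices cannot create a $k$-edge-connected subgraph. Since $G[X]$ is $\cF'_k$-saturated, it must be complete, say $G[X]=K_m$ with $m=|X|$. Writing $c_v$ for the number of edges from $v\in X$ to $\overline X$, we have $\sum_{v\in X}c_v=|[X,\overline X]|=k-1$, while $d_G(v)=(m-1)+c_v\ge k-1$ gives $c_v\ge k-m$. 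Summing yields $m(k-m)\le k-1$, which for $2\le m\le k$ forces $m\in\{k-1,k\}$.

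The crux is to contradict saturation in these two cases by adding a single edge that creates no $k$-edge-connected subgraph. The common mechanism: if the added edge is a cross edge $e=vw$ with $v\in X$ and $w\notin X$, then any $k$-edge-connected $H\ni e$ in $G+e$ must, by the cut argument above, contain all $k$ edges of $[X,\overline X]\cup\{e\}$; in particular $A:=V(H)\cap X$ contains every cut-endpoint of $X$ together with $v$, and for each $u\in A$ we have $d_H(u)\le(|A|-1)+c_u$ (plus $1$ if $u=v$), because $G[X]$ complete forces $H[A]\subseteq K_{|A|}$. When $m=k-1$, every $c_v$ equals $1$, so every vertex of $X$ is a cut-endpoint and $A=X$ with $|A|=k-1$; picking any non-edge $vw$ to the other side (it exists, as $v$ has one cross-neighbor while $|\overline X|\ge2$) and any $u\in A\setminus\{v\}$ gives $d_H(u)\le(k-2)+1<k$, a contradiction. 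When $m=k$, since $\sum c_v=k-1<k$ some vertex $v_0$ has $c_{v_0}=0$; adding $v_0w$ forces $A=X$ (as $v_0$ then has degree $k$ in $G+v_0w$), whence every $u\neq v_0$ needs $c_u\ge1$, and summing to $k-1$ shows $v_0$ is unique with all other cross-degrees equal to $1$. I then add an edge $u_iw'$ at a cut-endpoint $u_i$ (which has a non-neighbor on the other side, its single cross-neighbor being one of at least two vertices): the forced set $A$ contains all of $X\setminus\{v_0\}$, and whether or not $v_0\in A$, some vertex of $A$ (a $u_j$ with $j\neq i$ if $v_0\notin A$, or $v_0$ itself, with $c_{v_0}=0$, if $v_0\in A$) has $H$-degree at most $k-1$, again contradicting $k$-edge-connectivity. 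Hence $2\le|X|\le k$ is impossible, giving $|S|\ge k+1$ and $|\Sb|\in\{1\}\cup\{k+1,k+2,\dots\}$.

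I expect the case $m=k$ (that is, $G[X]=K_k$) to be the main obstacle. Unlike $m=k-1$, adding a single edge does not fail immediately on degree grounds; one must first spend one edge-addition to pin down the cross-degree sequence (a unique vertex of cross-degree $0$, all others $1$) and only then exhibit the genuinely bad edge, placed at a different vertex so that the forced subgraph is trapped between excluding a degree-deficient $u_j$ and including the degree-deficient $v_0$.
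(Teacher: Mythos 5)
Your proposal is correct, and it shares the paper's skeleton: the $(k-1)$-cut argument showing each side is $\cF'_k$-saturated (with completeness as the trivially saturated case), the minimum-degree count from $\kappa'(G)=k-1$ forcing a complete side of size $m\in\{2,\dots,k\}$ to have $m\in\{k-1,k\}$, and two edge-addition contradictions. Where you genuinely diverge is the mechanism in the final cases. The paper argues by excluding vertices of $\Sb$ from $H$ on degree grounds: for $m=k-1$ only the new endpoint of the added cross edge can lie in $H$, and then its $H$-degree is at most $2<k$; for $m=k$ it adds an edge at a generic $y\neq v$, uses the averaging bound $\FL{k/(i+1)}+i<k$ to force $V(H)\cap\Sb=\{y\}$, concludes that all $k-1$ cut edges concentrate at $y$, and then adds an edge at the deficient vertex $v$. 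You instead use the uniform observation that $H$ must contain \emph{all} $k$ crossing edges of $G+e$ (exactly $k$ exist, and any nontrivial edge cut of $H$ needs $k$), so $V(H)$ is forced to contain every cut-endpoint; and in the $m=k$ case your first addition sits at the cross-degree-$0$ vertex $v_0$, where $d_{G+e}(v_0)=k$ pins all of $v_0$'s incident edges into $H$, yielding the structural conclusion \emph{opposite} to the paper's (all other cross-degrees equal to $1$, rather than concentration at one vertex), after which the second addition at some $u_i$ traps $H$ between excluding a degree-$(k-1)$ vertex $u_j$ and including the degree-$(k-1)$ vertex $v_0$. Both routes are sound --- each derives intermediate structure of a hypothetical configuration being refuted --- and your crossing-edge-forcing mechanism is arguably cleaner and more uniform than the paper's separate degree bounds; you also treat $X\in\{S,\Sb\}$ symmetrically, only needing $|\overline{X}|\ge2$ (which you verify), where the paper shortcuts via $|S|\ge|\Sb|$ and analyzes only $\Sb$.
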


\begin{proof}
First, we prove for $T\in\{S,\Sb\}$ that the induced subgraph $G[T]$ is a
complete subgraph or is $\cF'_k$-saturated with at least $k+1$ vertices.
When $G[T]$ is not complete, take $e\in E(\overline{G[S]})$, and let $G'$
be the graph obtained from $G$ by adding $e$.  Since $G$ is $\cF'_k$-saturated,
$G'$ contains a $k$-edge-connected subgraph $H$, and $e\in E(H)$.  Since
$|[T,\Tb]|=k-1$, no vertex of $H$ lies in $\Tb$.  Hence $H\esub G[T]$, which
implies that $G[T]$ is $\cF'_k$-saturated.  Since $G[T]$ is not complete, that
requires $|T|\ge k+1$.

If $G[\Sb]$ is a nontrivial $\cF'_k$-saturated graph, then $G[S]$ is also, by
$|S|\ge|\Sb|$ and the preceding paragraph.  If $G[\Sb]=K_1$, then
$|V(G)|\ge k+2$ and the preceding paragraph yield again that $G[S]$ is a
nontrivial $\cF'_k$-saturated graph.  Hence it suffices to show that $G[S]$
cannot be a complete graph with $|\Sb|\ge2$.

By Lemma~\ref{connectivity}, $\delta(G)\ge k-1$.  The vertex of $\Sb$ incident
to the fewest edges of $[S,\Sb]$ has degree at most $\FL{\FR{k-1}{j}}+j-1$,
where $j=|\Sb|$.  Since $j\ge2$, we thus have $j\ge k-1$.

If $j=k-1$, then $\delta(G)\ge k-1$ requires each vertex of $\Sb$ to be
incident to exactly one edge of the cut.  Adding an edge across the cut then
increases the degree of only one vertex of $\Sb$ to $k$.  Hence only that
vertex can lie in $H$, which restricts its degree in $H$ to $1$.

We may therefore assume $|\Sb|=k$, since $K_{k+1}\not\esub G$, and $|S|\ge k$.
Since $|[S,\Sb]|=k-1$, some $v\in \Sb$ has degree only $k-1$ in $G$, and every
vertex of $\Sb$ has a nonneighbor in $S$.  Choose $y\in \Sb$ with $y\ne v$, and
choose $x\in S$ with $xy\notin E(G)$.  Let $G'$ be the graph obtained by adding
$xy$ to $G$.  A $k$-edge-connected subgraph $H$ of $G'$ must contain $y$ but
cannot contain $v$.  If $H$ has $i+1$ vertices in $\Sb-\{v\}$, then a vertex
among these with least degree in $H$ has degree at most $\FL{\FR{k}{i+1}}+i$ in
$H$.  Since $i\le k-2$ and $\delta(H)\ge k$, we have $i=0$.

Hence $V(H)\cap\Sb=\{y\}$ and all edges of $[S,\Sb]$ are incident to $y$.
All vertices of $\Sb$ other than $y$ have degree $k-1$ in $G$.  In this
case let $G'$ be the graph obtained by adding $xv$ to $G$.  Since vertices
in the resulting $k$-edge-connected subgraph $H$ must have degree at least $k$,
the only vertices from $\Sb$ that can be included are $y$ and $v$.  However,
now $d_H(v)=2$, which prohibits such a subgraph $H$ since $k\ge3$.
\end{proof}

\begin{lem}\label{subgraph}
If $G$ is an $n$-vertex $\cF'_k$-saturated graph with $n\ge k+1$, then $G$
contains $K_{k+1}^-$.
\end{lem}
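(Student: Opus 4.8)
The plan is to induct on $n$. For $k=2$ every $\cF'_2$-saturated graph is a tree, and every tree with at least three vertices contains $P_3$, which is $K_3^-$; so I would dispose of that case immediately and henceforth assume $k\ge3$, which is exactly the hypothesis Lemma~\ref{mainlemma} requires.

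For the base case $n=k+1$, I would classify the $\cF'_k$-saturated graphs on $k+1$ vertices and show that $G=K_{k+1}^-$. Since $K_{k+1}$ is $k$-edge-connected, we have $G\ne K_{k+1}$, so $G$ omits at least one edge. By Lemma~\ref{connectivity}, $\delta(G)\ge\kappa'(G)=k-1$, so on $k+1$ vertices each vertex is nonadjacent to at most one other; that is, $\overline{G}$ is a nonempty matching $M$. If $\abs{M}=1$, then $G=K_{k+1}^-$ and we are done. If $\abs{M}\ge2$, I would pick a missing edge $e$ and note that $G+e$ still omits a nonempty matching, hence still has minimum degree $k-1$ while having $k+1$ vertices. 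But a $k$-edge-connected graph has minimum degree at least $k$ and therefore at least $k+1$ vertices, so the only $k$-edge-connected graph on $k+1$ vertices is $K_{k+1}$ itself. Since $G+e\ne K_{k+1}$, it has no $k$-edge-connected subgraph, contradicting saturation. Thus $\abs{M}=1$ and $G=K_{k+1}^-$.

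For the inductive step $n\ge k+2$, Lemma~\ref{connectivity} gives $\kappa'(G)=k-1$, so $G$ has an edge cut $[S,\Sb]$ of size $k-1$; choosing the labeling so that $\abs{S}\ge\abs{\Sb}\ge1$, Lemma~\ref{mainlemma} tells us that $G[S]$ is a nontrivial $\cF'_k$-saturated graph, so in particular $\abs{S}\ge k+1$. Because $\abs{\Sb}\ge1$ forces $\abs{S}\le n-1<n$, the induction hypothesis applies to $G[S]$ and yields $K_{k+1}^-\esub G[S]\esub G$, completing the induction.

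The main obstacle is the base case: the inductive step is essentially immediate once Lemma~\ref{mainlemma} is in hand, but the base case rests on the observation that deleting a matching of size at least two from $K_{k+1}$ destroys saturation, because restoring a single edge of that matching still leaves a graph with no $k$-edge-connected subgraph on $k+1$ vertices. This in turn relies on the elementary fact, used repeatedly, that a $k$-edge-connected graph has minimum degree at least $k$ and hence at least $k+1$ vertices, so on exactly $k+1$ vertices it must be $K_{k+1}$.
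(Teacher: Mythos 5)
Your proof is correct and follows essentially the same route as the paper: induction on $n$, with the inductive step driven by Lemma~\ref{connectivity} and Lemma~\ref{mainlemma} exactly as in the paper's argument. The only difference is that you carefully justify the base case (the paper simply asserts that $K_{k+1}^-$ is the unique $\cF'_k$-saturated graph on $k+1$ vertices, which your matching argument correctly verifies) and you explicitly dispose of $k=2$, which the paper leaves implicit.
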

\begin{proof}
We use induction on $n$, the number of vertices.  The claim holds when $n=k+1$,
since $K_{k+1}^-$ is the only $\cF'_k$-saturated graph with $k+1$-vertices.

Now consider $n\geq k+2$.  Since $\kappa'(G)=k-1$ by Lemma \ref{connectivity},
there exists $S\subseteq V(G)$ such that $|[S,\Sb]|=k-1$ and $|S|\ge |\Sb|$.
By Lemma~\ref{mainlemma}, $|S|\ge k+1$ and $G[S]$ is $\mathcal{F}'_k$-saturated.
By the induction hypothesis, $G[S]$ (and hence also $G$) contains $K_{k+1}^-$.
\end{proof}

The lemmas allow us to prove the main result of this section.

\begin{thm}\label{main}
For $n\in\NN$, with $t=\FL{\FR n{k+1}}$,
$$\sat(n,\cF'_k)=(k-1)(n-1)-t\CH{k-1}2,$$
with equality achieved for $k=1$ by $\Kb_n$, for $k=2$ by trees, and for
$k\ge3$ by $G_{k,n}$.
\end{thm}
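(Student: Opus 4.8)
The plan is to establish the lower bound $|E(G)|\ge\rho_k(n)$ for every
$n$-vertex $\cF'_k$-saturated graph $G$ when $n\ge k+1$, since the matching
upper bound $\sat(n,\cF'_k)\le\rho_k(n)$ already follows from
Proposition~\ref{construction} (with the trivial cases $k=1$ and $k=2$ handled
by $\Kb_n$ and by trees as noted). The cases $k\le2$ being immediate, assume
$k\ge3$. I would argue by induction on $n$. For the base case $n=k+1$, the only
$\cF'_k$-saturated graph is $K_{k+1}^-$, which has $\CH{k+1}2-1$ edges; a direct
check shows this equals $\rho_k(k+1)=(k-1)k-\CH{k-1}2$, since here
$t=\FL{\FR{k+1}{k+1}}=1$.

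For the inductive step, take $n\ge k+2$ and let $G$ be $\cF'_k$-saturated.
By Lemma~\ref{connectivity} we have $\kappa'(G)=k-1$, so there is a minimum
edge cut $[S,\Sb]$ with $|[S,\Sb]|=k-1$; choose the labeling so that
$|S|\ge|\Sb|$. Lemma~\ref{mainlemma} then tells us that $G[S]$ is a nontrivial
$\cF'_k$-saturated graph (so $|S|\ge k+1$) and that $G[\Sb]$ is either $K_1$ or
a nontrivial $\cF'_k$-saturated graph. The key identity to exploit is that
$|E(G)|=|E(G[S])|+|E(G[\Sb])|+(k-1)$, because the cut contributes exactly
$k-1$ edges. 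I would apply the induction hypothesis to the two sides, writing
$|S|=a$ and $|\Sb|=b$ with $a+b=n$, to obtain
$|E(G[S])|\ge(k-1)(a-1)-\FL{\FR a{k+1}}\CH{k-1}2$ and an analogous bound on
$|E(G[\Sb])|$ (with the convention that a $K_1$ side contributes $0$ edges and
$\FL{\FR1{k+1}}=0$, so the formula still holds).

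The main obstacle will be the arithmetic of the floor functions: after summing
the two inductive bounds and adding $k-1$ for the cut, I must verify that
$$(k-1)(a-1)+(k-1)(b-1)+(k-1)-\left[\FL{\tfrac a{k+1}}+\FL{\tfrac b{k+1}}\right]\CH{k-1}2\ge(k-1)(n-1)-\FL{\tfrac n{k+1}}\CH{k-1}2.$$
The linear terms collapse to $(k-1)(n-1)$ exactly, so the inequality reduces to
$\FL{\FR a{k+1}}+\FL{\FR b{k+1}}\le\FL{\FR n{k+1}}$ with $n=a+b$. This is the
standard superadditivity-type fact for floors, namely
$\lfloor x\rfloor+\lfloor y\rfloor\le\lfloor x+y\rfloor$, which holds for all
reals and hence gives the desired bound term by term. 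The only subtlety is
confirming that the $K_1$ case and the edge-count bookkeeping respect this
inequality, but since $\FL{\FR1{k+1}}=0$ these cause no difficulty. Thus the
induction closes and $\sat(n,\cF'_k)=\rho_k(n)$, completing the proof.
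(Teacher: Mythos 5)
Your proposal is correct and takes essentially the same route as the paper's proof: induction on $n$ using Lemma~\ref{connectivity} and Lemma~\ref{mainlemma} to split $G$ along a $(k-1)$-edge cut $[S,\Sb]$, then closing the induction with $\FL{\FR{a}{k+1}}+\FL{\FR{b}{k+1}}\le\FL{\FR{a+b}{k+1}}$. The only cosmetic difference is that you fold the $G[\Sb]=K_1$ case into the general bound via the convention $\FL{\FR{1}{k+1}}=0$, where the paper handles that case separately with $t'\in\{t,t-1\}$.
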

\begin{proof}
The claims for $k\le2$ are immediate.  For $k\ge3$,
Proposition~\ref{construction} yields the upper bound.
For the lower bound, we use induction on $n$.
When $n=k+1$, so $t=1$, the only $\cF'_k$-saturated $n$-vertex graph is
$K_{k+1}^-$, which indeed has $(k-1)k-\CH{k-1}2$ edges.

For $n>k+1$, let $G$ be a $\cF'_k$-saturated $n$-vertex graph.  Since
$\kappa'(G)=k-1$ by Lemma \ref{connectivity}, there exists $S\subseteq V(G)$
such that $|[S,\Sb]|=k-1$ and $|S|\ge |\Sb|$. By Lemma~\ref{mainlemma},
$G[S]$ is a nontrivial $\cF'_k$-saturated graph and $G[\overline{S}]$ is $K_1$
or is a nontrivial $\cF'_k$-saturated graph.  Let $t'=\FL{\FR{|S|}{k+1}}$.
By the induction hypothesis, $|E(G[S])|\ge (k-1)(|S|-1)-t'\CH{k-1}2$.

If $G[\Sb]=K_1$, then $|S|=n-1$ and exactly $k-1$ edges lie outside $G[S]$.
Hence $|E(G)\ge (k-1)(n-1)-t'\CH{k-1}2$.  Since $t'\in\{t,t-1\}$, the desired
inequality holds.

Therefore, we may assume that $G[S]$ and $G[\Sb]$ are both nontrivial
$\cF'_k$-saturated graphs.  Let $t''=\FL{\FR{|\Sb|}{k+1}}$.
Note that $t'+t''\le t$.  Using the induction hypothesis and adding the $k-1$
edges of the cut,
$$
|E(G)|\ge (k-1)(|S|+|\Sb|-2)+(k-1)-(t'+t'')\CH{k-1}2
\ge (k-1)(n-1)-t\CH{k-1}2.
$$

Hence $|E(G)|\ge \rho_k(n)$. 
\end{proof}

Next we determine the maximum number of edges in
$\cF'_k$-saturated $n$-vertex graphs.
%Before proving it, note that $\ex(n,\mathcal{F}'_1)=0$ and
%$\ex(n,\mathcal{F}'_2)=n-1$, thus we assume that $k \ge 3$.

\begin{thm}\label{main2}
If $n\ge k+1$, then $\ex(n,\mathcal{F}'_k)=(k-1)n-{k \choose 2}$.
Furthermore, the $n$-vertex $\cF'_k$-saturated graphs with the
most edges arise from $(n-1)$-vertex $\cF'_k$-saturated graphs with the
most edges by adding one vertex with $k-1$ neighbors.
\end{thm}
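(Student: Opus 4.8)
The plan is to establish the two bounds separately, reading the extremal characterization off the tight case of the upper bound. The cases $k\le 2$ are immediate ($\Kb_n$ for $k=1$ and trees for $k=2$), so I assume $k\ge3$. For the lower bound $\ex(n,\cF'_k)\ge(k-1)n-\CH k2$, I would check that the complete split graph $S_{n,k}$ is $\cF'_k$-saturated: its $n-k+1$ independent vertices have degree $k-1$ and hence cannot lie in any $k$-edge-connected subgraph, so $S_{n,k}$ itself contains none, while adding any missing edge (necessarily between two independent vertices) creates a $K_{k+1}$ with the central $K_{k-1}$. A direct count gives $\abs{E(S_{n,k})}=\CH{k-1}2+(n-k+1)(k-1)=(k-1)n-\CH k2$, so this bound is attained.

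For the upper bound I would induct on $n$, using the same cut-decomposition as in Theorem~\ref{main}. The base case $n=k+1$ is forced, since $K_{k+1}^-$ is the only $\cF'_k$-saturated graph on $k+1$ vertices and has $\CH{k+1}2-1=(k-1)(k+1)-\CH k2$ edges. For $n\ge k+2$, Lemma~\ref{connectivity} yields $\kappa'(G)=k-1$, so there is a set $S$ with $\abs{[S,\Sb]}=k-1$ and $\abs S\ge\abs{\Sb}$; by Lemma~\ref{mainlemma}, $G[S]$ is a nontrivial $\cF'_k$-saturated graph and $G[\Sb]$ is $K_1$ or a nontrivial $\cF'_k$-saturated graph. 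If $G[\Sb]=K_1$, then $\Sb$ is a single vertex of degree $k-1$, and the induction hypothesis applied to the $(n-1)$-vertex graph $G[S]$ gives $\abs{E(G)}\le\bigl[(k-1)(n-1)-\CH k2\bigr]+(k-1)=(k-1)n-\CH k2$, with equality exactly when $G[S]$ is extremal. If instead both $G[S]$ and $G[\Sb]$ are nontrivial $\cF'_k$-saturated, then summing the two induction bounds and adding the $k-1$ cut edges gives $\abs{E(G)}\le(k-1)n-2\CH k2+(k-1)$.

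The decisive point --- and the one requiring $k\ge3$ --- is the elementary inequality $k-1<\CH k2$, which makes the bound in the second case strictly smaller than $(k-1)n-\CH k2$. Hence no extremal graph can split into two nontrivial saturated parts, so every extremal $G$ falls into the first case: it is obtained from an $(n-1)$-vertex $\cF'_k$-saturated graph $G[S]$ by attaching one vertex of degree $k-1$, and the equality analysis forces $G[S]$ itself to be extremal. This is exactly the asserted recursion, and iterating it down to the base graph $K_{k+1}^-$ describes all extremal graphs. I expect the main obstacle to be largely bookkeeping: verifying that the first case's equality condition transfers cleanly to $G[S]$ and that the degree-$(k-1)$ attachment is precisely what Lemma~\ref{mainlemma} provides when $G[\Sb]=K_1$. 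The one genuinely new ingredient beyond Theorem~\ref{main} is the sign flip in $k-1<\CH k2$, which makes the balanced two-part split \emph{suboptimal} for the maximum whereas the analogous estimate drove the minimum.
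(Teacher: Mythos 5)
Your proposal is correct and follows essentially the same route as the paper: induction on $n$ using Lemma~\ref{connectivity} and Lemma~\ref{mainlemma} to decompose along a $(k-1)$-edge cut, with the $G[\Sb]=K_1$ case giving equality and forcing the stated recursion, and the two-nontrivial-parts case strictly suboptimal precisely because $k-1<\binom k2$ for $k\ge3$. Two small points in your favor: your second-case count $(k-1)n-2\binom k2+(k-1)=(k-1)n-(k-1)^2$ is the correct arithmetic (the paper writes $(k-1)n-(k+1)(k-1)$, a harmless slip that does not affect the strict inequality for $k\ge3$), and your explicit verification that $S_{n,k}$ is $\cF'_k$-saturated with $(k-1)n-\binom k2$ edges supplies the lower bound, which the paper's proof leaves to a remark in its introduction.
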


\begin{proof}
As we have noted, $\cF'_1$-saturated graphs have no edges and 
$\cF'_2$-saturated graphs are trees, so we may assume $k\ge3$.
We use induction on $n$; when $n=k+1$, the only $\cF'_k$-saturated $n$-vertex
graph is $K_{k+1}^-$.

For $n>k+1$, let $G$ be an $\cF'_k$-saturated $n$-vertex graph.
As in Theorem~\ref{main}, there exists $S\subseteq V(G)$
such that $|[S,\Sb]|=k-1$ and $|S|\ge |\Sb|$. By Lemma~\ref{mainlemma},
$G[S]$ is a nontrivial $\cF'_k$-saturated graph and $G[\overline{S}]$ is $K_1$
or is a nontrivial $\cF'_k$-saturated graph.

Applying the induction hypothesis, if $G[\Sb]=K_1$, then
$|E(G)|\le (k-1)(n-1)+(k-1)-\CH k2=(k-1)n-\CH k2$, with equality only under
the claimed condition.  On the other hand, if $[\Sb]$ is a nontrivial
$\cF'_k$-saturated graph, then 
$$|E(G)|\le (k-1)|S|-\CH k2+(k-1)|\Sb|-\CH k2+(k-1)
=(k-1)n-(k+1)(k-1).$$
Since $k+1 > k/2$ when $k>0$, the upper bound in this case is strictly
smaller than the claimed upper bound.
\end{proof}

\section{Spectral radius and $\mathcal{F}_k'$-saturated graphs}

In this section, we give sharp lower bounds on the spectral radius for
$\mathcal{F}_k'$-saturated graphs and for $\cF_k$-saturated graphs.
The spectral radius of a graph $G$, denoted $\lambda_1(G)$, is the largest
eigenvalue of the adjacency matrix of $G$.  The following two lemmas are
well-known in spectral graph theory.

\begin{lem}[\cite{GR2013}]\label{PF}
If $H$ is a subgraph of $G$, then $\lambda_1(H)\leq\lambda_1(G)$.
\end{lem}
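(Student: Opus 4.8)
The plan is to prove this through the Rayleigh quotient characterization of the spectral radius, combined with the Perron--Frobenius theorem. For any graph $G$ with adjacency matrix $A(G)$, symmetry gives the variational formula
\[
\lambda_1(G) = \max_{x \neq 0} \frac{x^{T} A(G)\, x}{x^{T} x}.
\]
Moreover, since $A(G)$ is a nonnegative matrix, the Perron--Frobenius theorem guarantees that $\lambda_1(G)$ has an associated eigenvector with all entries nonnegative. The nonnegativity is precisely what will make a subgraph comparison work, so I would set this up first.

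First I would fix a nonnegative eigenvector $x$ of $A(H)$ corresponding to $\lambda_1(H)$, normalized so that $x^{T}x = 1$. Since a subgraph satisfies $V(H)\subseteq V(G)$, I would extend $x$ to a unit vector $\tilde{x}$ indexed by $V(G)$ by setting $\tilde{x}_v = x_v$ for $v\in V(H)$ and $\tilde{x}_v = 0$ otherwise. The key step is then to compare the two quadratic forms. Writing
\[
\tilde{x}^{T} A(G)\, \tilde{x} = \sum_{uv\in E(G)} 2\,\tilde{x}_u \tilde{x}_v,
\]
I would observe that every edge of $H$ is an edge of $G$, that the terms for $uv\in E(H)$ reproduce $x^{T}A(H)x$, and that the remaining terms, coming from edges in $E(G)\setminus E(H)$, are nonnegative because $\tilde{x}\ge 0$. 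Hence $\tilde{x}^{T} A(G)\, \tilde{x} \ge x^{T} A(H)\, x = \lambda_1(H)$, and combining this with the variational bound $\lambda_1(G) \ge \tilde{x}^{T} A(G)\, \tilde{x}$ yields $\lambda_1(G)\ge\lambda_1(H)$.

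The only delicate point, and the reason the whole argument hinges on Perron--Frobenius, is the nonnegativity of $x$: if $x$ were permitted to have entries of mixed sign, the extra terms contributed by edges in $E(G)\setminus E(H)$ could be negative, and the crucial inequality would fail. Thus the substantive content of the proof is the selection of a nonnegative principal eigenvector, after which the estimate is a direct computation. (Degenerate cases, such as $H$ having no edges, are immediate since then $\lambda_1(H)=0\le\lambda_1(G)$.)
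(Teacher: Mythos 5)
Your proof is correct: the zero-extension of a nonnegative principal eigenvector of $A(H)$ together with the Rayleigh quotient bound is exactly the standard argument, and you rightly identify nonnegativity as the point where non-induced subgraphs (extra edges inside $V(H)$) would otherwise break the comparison. The paper itself gives no proof of this lemma---it simply cites Godsil--Royle---and your argument coincides with the textbook proof given there, so no further comparison is needed (the only simplification available is that for a symmetric nonnegative matrix you can avoid invoking Perron--Frobenius: replace any top eigenvector $x$ by $|x|$, which has the same or larger Rayleigh quotient and hence also attains $\lambda_1$).
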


\begin{lem}[\cite{BH2012}]\label{In}
For any graph $G$, $$\frac{2|E(G)|}{|V(G)|}\leq\lambda_1(G)\leq\Delta(G)$$ with equality if and only if $G$ is regular.%bipartite.
\end{lem}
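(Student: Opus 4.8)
The plan is to treat $\lambda_1(G)$ as the largest eigenvalue of the real symmetric nonnegative adjacency matrix $A$ of $G$, and to prove the two inequalities separately: the lower bound from the variational (Rayleigh) characterization and the upper bound from a largest-coordinate argument applied to a Perron eigenvector. Throughout, write $n=|V(G)|$ and $m=|E(G)|$, and recall $\sum_{v}d_G(v)=2m$, so that $2m/n$ is exactly the average degree of $G$.

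For the lower bound I would use that $\lambda_1(G)=\max_{x\neq 0}(x^{\top}Ax)/(x^{\top}x)$ for the symmetric matrix $A$. Evaluating the quotient at the all-ones vector $\mathbf{1}$ gives $\mathbf{1}^{\top}A\mathbf{1}=\sum_{v}d_G(v)=2m$ and $\mathbf{1}^{\top}\mathbf{1}=n$, hence $\lambda_1(G)\ge 2m/n=2|E(G)|/|V(G)|$. This is the easy half and presents no difficulty. For the upper bound I would invoke Perron--Frobenius to obtain a nonnegative eigenvector $x$ with $Ax=\lambda_1(G)x$, and pick an index $i$ with $x_i=\max_j x_j>0$. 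Reading off the $i$th coordinate yields $\lambda_1(G)x_i=(Ax)_i=\sum_{j\sim i}x_j\le d_G(i)\,x_i\le \Delta(G)\,x_i$, and dividing by $x_i>0$ gives $\lambda_1(G)\le\Delta(G)$.

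For the equality assertion I would read ``equality'' as equality throughout the displayed chain, since this is the only regime in which both bounds coincide. Equality in the lower bound means the Rayleigh quotient attains its maximum at $\mathbf{1}$, which for a symmetric matrix forces $\mathbf{1}$ to be a $\lambda_1$-eigenvector; thus $A\mathbf{1}=\lambda_1(G)\mathbf{1}$, every vertex has the same degree, and $G$ is regular. Conversely, if $G$ is $d$-regular then $\mathbf{1}$ is an eigenvector with eigenvalue $d$, and $2m/n=d=\Delta(G)$, so the entire chain collapses to $d$. The step I expect to require the most care is the equality analysis of the upper bound in isolation: for a disconnected graph one can have $\lambda_1(G)=\Delta(G)$ without $G$ being regular (a regular component attaining the maximum eigenvalue alongside a sparser component). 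I would therefore either state the characterization as equality throughout the chain, as above, or restrict attention to connected $G$, where the Perron vector is strictly positive and the tightness of $\sum_{j\sim i}x_j\le\Delta(G)x_i$ propagates along edges to force all coordinates of $x$ equal and every degree equal to $\Delta(G)$, again giving regularity.
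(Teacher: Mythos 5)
The paper offers no proof of this lemma---it is quoted from Brouwer and Haemers \cite{BH2012} as a known fact---so there is no internal argument to compare against. Your proof is the standard textbook one and it is correct: the lower bound by evaluating the Rayleigh quotient of the symmetric matrix $A$ at $\mathbf{1}$ (giving the average degree $2|E(G)|/|V(G)|$), and the upper bound by Perron--Frobenius, reading off the eigenvalue equation at a maximum coordinate of a nonnegative eigenvector. Both halves are airtight, including the observation that a maximizer of the Rayleigh quotient of a symmetric matrix must itself be a $\lambda_1$-eigenvector, which is what converts equality in the lower bound into $A\mathbf{1}=\lambda_1\mathbf{1}$ and hence regularity.

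Your most valuable contribution is the equality-case caveat, and you are right to flag it: as the lemma is phrased, ``with equality if and only if $G$ is regular'' is not literally true of the upper bound in isolation, since a disconnected graph such as $K_3\cup K_2$ satisfies $\lambda_1(G)=\Delta(G)$ without being regular (equality there characterizes graphs with a $\Delta$-regular component). Your two repairs---reading equality as holding throughout the chain, or restricting to connected $G$ and propagating tightness of $\sum_{j\sim i}x_j\le \Delta(G)x_i$ along edges via strict positivity of the Perron vector---are both standard and correct. Note that the imprecision is harmless for this paper: in the only place the lemma is invoked (Theorem~\ref{lambda1}), only the lower bound $\lambda_1(G)\ge 2|E(G)|/|V(G)|$ is used, and for that bound your characterization of equality (exactly the regular graphs) is the correct one.
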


For a vertex partition $P$ of a graph $G$, with parts $\VEC V1t$,
the \dfn{quotient matrix} $Q$ has $(i,j)$-entry $\frac{|[V_i,V_j]|}{|V_i|}$
when $i\neq j$ and $\frac{2|E(G[V_i])|}{|V_i|}$ when $i=j$.  Let $q_{i,j}$
denote the $(i,j)$-entry in $Q$.  A vertex partition $P$ with $t$ parts is
\dfn{equitable} if whenever $i,j\in[t]$ and $v\in V_i$, the number of 
neighbors of $v$ in $V_j$ is $q_{i,j}$.

\begin{lem}[\cite{GR2013}]\label{quotient}
If $\{V_1,\ldots, V_t\}$ is an equitable partition of $V(G)$, then
$\lambda_1(G)=\lambda_1(Q)$, where $Q$ is the quotient matrix for the partition.
\end{lem}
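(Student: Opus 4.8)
The plan is to encode the equitable partition as a matrix identity and exploit the invariance it forces. Let $A$ be the adjacency matrix of $G$, and let $B$ be the $n\times t$ characteristic matrix of the partition, whose $(v,i)$-entry is $1$ if $v\in V_i$ and $0$ otherwise. The columns of $B$ are the nonzero, pairwise orthogonal indicator vectors of the parts, so $B$ has full column rank $t$ and is injective. The definition of equitable says precisely that for $v\in V_i$ the number of neighbors of $v$ in $V_j$ equals $q_{i,j}$ (including $i=j$, where $q_{i,i}$ is the common internal degree $2|E(G[V_i])|/|V_i|$), and comparing $(AB)_{v,j}$ with $(BQ)_{v,j}$ then gives the single identity $AB=BQ$. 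This identity is the engine of the whole argument.

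First I would read off the easy inequality $\lambda_1(Q)\le\lambda_1(G)$. If $Qz=\mu z$ with $z\ne 0$, then $A(Bz)=B(Qz)=\mu(Bz)$ and $Bz\ne 0$ by injectivity of $B$, so every eigenvalue $\mu$ of $Q$ is an eigenvalue of $A$. Since $A$ is real symmetric, each such $\mu$ is real and at most $\lambda_1(G)$, whence $\lambda_1(Q)\le\lambda_1(G)$.

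The real work is the reverse inequality. From $AB=BQ$ the column space $\mathrm{Col}(B)$ is $A$-invariant, and because $A$ is symmetric its orthogonal complement is $A$-invariant as well. I would take a nonnegative Perron eigenvector $x$ of $A$ with $Ax=\lambda_1(G)x$ (which exists by the Perron--Frobenius theorem for the nonnegative matrix $A$) and split it as $x=x'+x''$ along $\mathrm{Col}(B)\oplus\mathrm{Col}(B)^{\perp}$. Invariance of both summands, together with uniqueness of the orthogonal decomposition, forces $Ax'=\lambda_1(G)x'$ and $Ax''=\lambda_1(G)x''$. Writing $x'=Bz$ and using injectivity of $B$ converts $ABz=\lambda_1(G)Bz$ into $Qz=\lambda_1(G)z$, so $\lambda_1(G)$ is an eigenvalue of $Q$ and hence $\lambda_1(G)\le\lambda_1(Q)$. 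Combined with the previous step this gives $\lambda_1(G)=\lambda_1(Q)$.

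The one point needing genuine care, and the main obstacle, is verifying that $x'\ne 0$, i.e. that the Perron eigenvector is not carried entirely by $\mathrm{Col}(B)^{\perp}$. Here the nonnegativity of $x$ is essential: the orthogonal projection onto $\mathrm{Col}(B)$ replaces $x$ on each part $V_i$ by its average $\frac{1}{|V_i|}\sum_{v\in V_i}x_v$, and since $x\ge 0$ and $x\ne 0$ at least one average is positive, so $x'\ne 0$. Without this sign information one could not rule out the top eigenvalue living wholly in $\mathrm{Col}(B)^{\perp}$, which is exactly the degeneracy a non-equitable partition would permit.
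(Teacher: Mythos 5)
Your proof is correct. A point of context first: the paper gives no proof of this lemma at all---it is quoted from Godsil and Royle \cite{GR2013}---so the comparison below is against the standard textbook treatment rather than anything in the paper. Your write-up is a complete and careful argument: the identity $AB=BQ$ is exactly the right encoding of equitability; the inclusion of the spectrum of $Q$ in that of $A$ is handled correctly (and it quietly supplies the fact that all eigenvalues of $Q$ are real, so that $\lambda_1(Q)$ is well defined even though $Q$ is not symmetric); the splitting $Ax'=\lambda_1(G)x'$ and $Ax''=\lambda_1(G)x''$ does follow from uniqueness of the orthogonal decomposition, since $\mathrm{Col}(B)$ is $A$-invariant by $AB=BQ$ and its complement is $A$-invariant by symmetry of $A$; and you correctly isolated the one genuine danger point, $x'\neq 0$, which your averaging description of the projection onto $\mathrm{Col}(B)$ settles, because a nonzero nonnegative vector has a strictly positive average on at least one part. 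The usual proof runs in the opposite direction: $Q$ is a nonnegative matrix, so by Perron--Frobenius its spectral radius $\mu$ has a nonnegative eigenvector $z$; then $Bz$ is a nonnegative eigenvector of $A$ for $\mu$, and for connected $G$ the uniqueness part of Perron--Frobenius forces $\mu=\lambda_1(G)$. Your direction---projecting the Perron vector of $A$ down rather than lifting the Perron vector of $Q$ up---buys something concrete: it nowhere uses connectivity of $G$ (a nonnegative $\lambda_1$-eigenvector exists even for disconnected $G$, supported on a component attaining $\lambda_1$), whereas the lifting argument needs connectivity, or a component-by-component reduction, to invoke uniqueness of the Perron direction. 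Since the lemma as stated in the paper carries no connectivity hypothesis, your route actually matches the stated generality more cleanly.
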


\begin{thm}\label{largest}
If $G$ is a nontrivial $\mathcal{F}'_k$-saturated graph, then
$\lambda_1(G)\geq(k-2+\sqrt{k^2+4k-4})/2$, with equality for $K_{k+1}^-$.
\end{thm}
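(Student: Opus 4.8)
The plan is to combine the structural result from Section~3 with spectral monotonicity, reducing the entire bound to a single eigenvalue computation for $K_{k+1}^-$. First I would invoke Lemma~\ref{subgraph}, which guarantees that every nontrivial $\cF'_k$-saturated graph $G$ (having at least $k+1$ vertices) contains $K_{k+1}^-$ as a subgraph. By Lemma~\ref{PF}, passing to a subgraph cannot increase the spectral radius, so $\lambda_1(G)\ge\lambda_1(K_{k+1}^-)$. Hence it suffices to show $\lambda_1(K_{k+1}^-)=(k-2+\sqrt{k^2+4k-4})/2$, and then equality for $G=K_{k+1}^-$ is immediate.

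To compute $\lambda_1(K_{k+1}^-)$, I would exploit the symmetry of $K_{k+1}^-$ via an equitable partition and apply Lemma~\ref{quotient}. Let the two nonadjacent vertices form the part $V_1$ and the remaining $k-1$ vertices (which induce $K_{k-1}$ and are each adjacent to everything else) form $V_2$. One checks directly that this partition is equitable: each vertex of $V_1$ has $0$ neighbors in $V_1$ and $k-1$ neighbors in $V_2$, while each vertex of $V_2$ has $2$ neighbors in $V_1$ and $k-2$ neighbors in $V_2$. The quotient matrix is therefore
$$
Q=\begin{pmatrix} 0 & k-1 \\ 2 & k-2 \end{pmatrix}.
$$

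By Lemma~\ref{quotient}, $\lambda_1(K_{k+1}^-)=\lambda_1(Q)$, so I would finish by reading off the dominant root of the characteristic polynomial of $Q$. A short calculation gives $\det(\lambda I-Q)=\lambda^2-(k-2)\lambda-2(k-1)$, whose discriminant is $(k-2)^2+8(k-1)=k^2+4k-4$; thus the larger eigenvalue is exactly $(k-2+\sqrt{k^2+4k-4})/2$, as claimed.

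There is no substantial obstacle here: the proof is essentially a two-line deduction from Lemma~\ref{subgraph} and Lemma~\ref{PF}, followed by a routine $2\times 2$ eigenvalue computation justified by the equitable-partition machinery (Lemma~\ref{quotient}). The only point requiring care is verifying that the proposed two-part partition is genuinely equitable, since that is what licenses replacing $\lambda_1(K_{k+1}^-)$ by $\lambda_1(Q)$; once that check is in place, everything else is forced. Equality in the theorem holds precisely because $K_{k+1}^-$ is itself a nontrivial $\cF'_k$-saturated graph, so the lower bound is attained.
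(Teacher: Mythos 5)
Your proof is correct and takes essentially the same route as the paper: Lemma~\ref{subgraph} plus Lemma~\ref{PF} reduce everything to computing $\lambda_1(K_{k+1}^-)$, which both you and the paper do via the same two-part equitable partition and Lemma~\ref{quotient}. The only cosmetic difference is that your quotient matrix is the transpose of the one displayed in the paper (yours actually matches the paper's stated definition of $Q$ entry by entry), which is immaterial since a matrix and its transpose have the same characteristic polynomial, and your resulting root $(k-2+\sqrt{k^2+4k-4})/2$ agrees with the paper's.
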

\begin{proof}
First we prove
$\lambda_1(K_{k+1}^-)=(k-2+\sqrt{k^2+4k-4})/2$.  Let
$V(K_{k+1}^-)=\{x_1,\ldots,x_{k+1}\}$, with $d(x_1)=d(x_{k+1})=k-1$.
The vertex partition of $K_{k+1}^-$ given by $V_1=\{x_1,x_{k+1}\}$ and
$V_2=\{x_2,\ldots,x_k\}$ is equitable.  The corresponding quotient matrix $Q$
is $\CH{0\quad~~~2}{k-1~~k-2}$.
%$
%\left(
%  \begin{array}{cc}
%    0 & 2 \\
%    k-1 & k-2\\
%  \end{array}
%\right).
%$
%Thus we obtain $$\lambda_1(Q)=\frac{k-2+\sqrt{k^2+4k-4}}{2}.$$
By Lemma \ref{quotient},
$\lambda_1(K_{k+1}^-)=\lambda_1(Q)=(k-2+\sqrt{k^2+4k-4})/2$.

For any nontrivial
$\mathcal{F}'_k$-saturated graph $G$,  Lemma \ref{subgraph} yields
$K_{k+1}^-\esub G$.  By Lemma~\ref{PF}, $\lambda_1(G)\geq\lambda_1(K_{k+1}^-)$,
as desired.
\end{proof}

\begin{thm}\label{lambda1}
If $G$ is $\mathcal{F}_k$-saturated with $n$ vertices, where $n\ge k+1$, then\\
$\lambda_1(G)\geq(k-2+\sqrt{k^2+4k-4})/2$.
\end{thm}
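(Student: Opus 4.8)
The plan is to reduce the statement to a subgraph-containment fact and then invoke spectral monotonicity, exactly as in Theorem~\ref{largest}. Indeed, the claimed bound is precisely $\lambda_1(K_{k+1}^-)=(k-2+\sqrt{k^2+4k-4})/2$, which was already computed in the proof of Theorem~\ref{largest} from the equitable partition of $K_{k+1}^-$. Hence, by Lemma~\ref{PF}, it suffices to prove that every $\mathcal{F}_k$-saturated graph $G$ with $n\ge k+1$ vertices contains $K_{k+1}^-$ as a subgraph; then $\lambda_1(G)\ge\lambda_1(K_{k+1}^-)$ is immediate. The entire new content therefore lies in establishing this containment for \emph{vertex}-connectivity saturation, the analogue of Lemma~\ref{subgraph}.

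To obtain $K_{k+1}^-\subseteq G$, I would first record the two connectivity facts that parallel Lemmas~\ref{connectivity} and~\ref{mainlemma}, but now for vertex cuts. First, $\delta(G)\ge k-1$: a vertex $v$ with $d_G(v)\le k-2$ has a non-neighbor $u$, and adding $uv$ creates a $k$-connected subgraph $H$ containing $uv$, so $v\in V(H)$ and $d_H(v)\ge k$, which is impossible since $d_H(v)\le d_G(v)+1\le k-1$. Second, $\kappa(G)=k-1$: since $G$ has no $k$-connected subgraph we have $\kappa(G)\le k-1$, and if some separator $T$ had $|T|<k-1$, then adding an edge $xy$ between two components of $G-T$ would produce a $k$-connected $H$ containing $xy$; deleting $xy$ leaves $k-1$ internally disjoint $x,y$-paths, which must all pass through $T$, forcing $|T|\ge k-1$, a contradiction.

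With these in hand, I would prove $K_{k+1}^-\subseteq G$ by induction on $n$, mirroring Lemma~\ref{subgraph}. The base case $n=k+1$ is exact: the only $k$-connected graph on $k+1$ vertices is $K_{k+1}$, so the unique $\mathcal{F}_k$-saturated graph on $k+1$ vertices is $K_{k+1}$ with one edge deleted, namely $K_{k+1}^-$. For $n>k+1$, take a minimum separator $T$ with $|T|=k-1$, let $A$ be a component of $G-T$, and set $B=V(G)\setminus(A\cup T)$. The goal is a vertex-cut analogue of Lemma~\ref{mainlemma} asserting that one of $G[A\cup T]$ and $G[B\cup T]$ is again an $\mathcal{F}_k$-saturated graph on fewer than $n$ vertices, after which the induction hypothesis supplies $K_{k+1}^-$ inside it, and hence inside $G$. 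The structural reason to expect this is that, since $T$ separates $A$ from $B$ and $|T|=k-1<k$, a $k$-connected subgraph created by adding an edge internal to one side cannot use any vertex of the other side, so that side should inherit saturation.

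The step I expect to be the main obstacle is precisely this vertex-cut analogue of Lemma~\ref{mainlemma}. Unlike an edge cut $[S,\overline S]$, whose two sides are vertex-disjoint, a vertex separator $T$ is shared by both sides, so a non-edge lying inside $T$ is ambiguous: the $k$-connected subgraph it creates might appear on the side \emph{not} containing $K_{k+1}^-$. One must also dispose of the degenerate cases---when a side induces a complete graph or has fewer than $k+1$ vertices (for instance a one-vertex component $A=\{a\}$, where $\delta(G)\ge k-1$ forces $a$ to be adjacent to all of $T$)---and confirm that at least one side has at least $k+1$ vertices and genuinely inherits $\mathcal{F}_k$-saturation. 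This case analysis, analogous to the delicate argument in Lemma~\ref{mainlemma}, is where the real work of the proof is concentrated; once it is in place, the spectral conclusion follows at once from Lemma~\ref{PF}.
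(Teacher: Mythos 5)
Your proposal is not a complete proof: it reduces the theorem to the claim that every $\mathcal{F}_k$-saturated graph on at least $k+1$ vertices contains $K_{k+1}^-$, and then explicitly defers the crux of that claim---the vertex-cut analogue of Lemma~\ref{mainlemma}---as ``the main obstacle,'' without proving it. This is a genuine gap, not a routine adaptation left to the reader. The proof of Lemma~\ref{mainlemma} leans on the fact that the two sides of an edge cut $[S,\overline{S}]$ are vertex-disjoint: every non-edge of $G[S]$ lies wholly in $S$, and the $k$-edge-connected subgraph created by adding it cannot cross a cut of size $k-1$, so $G[S]$ inherits saturation cleanly. For a vertex separator $T$ with sides $A\cup T$ and $B\cup T$, the sides overlap in $T$, and---as you yourself observe---a non-edge with both ends in $T$ may, upon addition, create its $k$-connected subgraph entirely on the other side; if different $T$-internal non-edges are witnessed on different sides, \emph{neither} $G[A\cup T]$ nor $G[B\cup T]$ is $\mathcal{F}_k$-saturated, and your inductive step has nothing to stand on. You also give no analogue of the delicate second half of Lemma~\ref{mainlemma} ruling out complete or small sides. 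The two facts you do prove ($\delta(G)\ge k-1$ and $\kappa(G)=k-1$) are correct but far from sufficient, so the containment $K_{k+1}^-\subseteq G$ remains a conjecture inside your argument.

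The paper avoids all of this structure theory. Since $\sat(n,\mathcal{F}_k)$ is by definition the \emph{minimum} number of edges over $n$-vertex $\mathcal{F}_k$-saturated graphs, Theorem~\ref{k-con} immediately gives $|E(G)|\ge (k-1)n-\binom{k}{2}$ for every such $G$; Lemma~\ref{In} then yields $\lambda_1(G)\ge 2|E(G)|/n\ge 2(k-1)-k(k-1)/n$, and short arithmetic checks (for $k=2$ with $n\ge 4$, $k=3$ with $n\ge 5$, and $k\ge 4$ via $2(k-1)-k(k-1)/n\ge k>(k-2+\sqrt{k^2+4k-4})/2$) finish the proof, with the base case $n=k+1$ handled by the computation of $\lambda_1(K_{k+1}^-)$ already done in Theorem~\ref{largest}. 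The lesson for your write-up: the spectral input here is average degree, not minimum degree, so your bound $\delta(G)\ge k-1$ is both harder-earned and weaker than what Theorem~\ref{k-con} provides for free. To salvage your route you would have to actually prove the $K_{k+1}^-$-containment lemma (a stronger structural statement than the theorem requires, and one whose truth your sketch does not establish); the simpler fix is to replace your second and third paragraphs with the counting argument above.
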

\begin{proof}
For $k=1$, the bound is $0$ and the eigenvalues have sum $0$,
so we may assume $k\ge2$.  When $n=k+1$, the only $\cF_k$-saturated graph is
$K_{k+1}^-$, whose spectral radius as computed in Theorem~\ref{largest} is the
claimed bound.  Hence we may assume $n\ge k+2\ge 4$.

By Theorem \ref{k-con}, $|E(G)|\geq(k-1)n-{k\choose2}$.
By Lemma \ref{In},
$$\lambda_1(G)\ge\FR{2|E(G)|}{n}\ge\FR{2(k-1)n-2\CH k2}{n}
=2(k-1)-\FR{k(k-1)}{n}.$$
Thus it suffices to prove $2(k-1)-k(k-1)/n\ge (k-2+\sqrt{k^2+4k-4})/2$.

For $k=2$, this reduces to $2-2/n\ge \sqrt{2}$, which holds when $n\ge4$.
For $k=3$, it reduces to $4-6/n\ge (1+\sqrt{17})/2$, which holds when $n\ge5$.

For $k\geq4$, since $k>(k-2+\sqrt{k^2+4k-4})/2$, it suffices to prove
$2(k-1)-\frac{k(k-1)}{n}\geq k$.  We compute
$$2(k-1)-\frac{k(k-1)}{n}-k\geq k-2-\frac{k(k-1)}{k+2}=\frac{k-4}{k+2}\geq0.$$

This completes the proof.
\end{proof}

For $t \ge 3$, let $\mathcal{F}_{d,t}$ be the family of $d$-regular simple
graphs $H$ with $\kappa'(H)\le t$.  In~\cite{HOPPY}, it was proved that
the minimum of the second largest eigenvalue over graphs in $\mathcal{F}_{d,t}$
is the second largest eigenvalue of a smallest graph in $\mathcal{F}_{d,t}$.
Theorem~\ref{largest} and~\ref{lambda1} similarly say that
the minima of the spectral radius over $\mathcal{F}$-saturated graphs and
over $\mathcal{F}'$-saturated graphs are the spectral radii of the smallest
graph in these families.

%  \vspace{5mm} \noindent {\bf Acknowledgments.} 
%The authors gratefully acknowledge financial support for this research from
%the following grants: The National Natural Science Foundation of China and
%the Natural Science Foundation of Tianjin No.17JCQNJC00300 (H. Lei and Y. Shi),
%NRF-2017R1D1A1B03031758 (S. O), Recruitment Program of Foreign Experts,
%1000 Talent Plan, State Administration of Foreign Experts Affairs, China
%(D. B. West), and CNSF 00571319 (X. Zhu).\medskip

\end{document}